\newtheorem{theorem}{Theorem}[section]
\newtheorem{prop}[theorem]{Proposition}
\newtheorem{remark}{Remark}[section]
\newtheorem{claim}{Claim}
\newenvironment{proof-sketch}{\noindent{\bf Sketch of Proof}\hspace*{1em}}{\qed\bigskip}
\newcommand{\RR}{\mathbb R}
\newcommand{\NN}{\mathbb N}
\renewcommand{\leq}{\leqslant}
\renewcommand{\geq}{\geqslant}
\DeclareMathOperator*{\essinf}{ess\,inf}
\begin{document}
%\hfill\today\bigskip

\title[Robin double-phase problems with singular and superlinear terms]{Robin double-phase problems with \\ singular and superlinear terms}

%%%%%%%%%%%%%%%%%%%%%%%%%%%%%%%%%%%%%%%%%%%%%%%%%%%%%%%%%%%%%%%%%%%%%%%
\author[N.S. Papageorgiou]{N.S. Papageorgiou}
\address[N.S. Papageorgiou]{ Department of Mathematics,
National Technical University,
				Zografou Campus, 15780 Athens, Greece \& Institute of Mathematics, Physics and Mechanics, 1000 Ljubljana, Slovenia}
\email{\tt npapg@math.ntua.gr}

\author[V.D. R\u{a}dulescu]{V.D. R\u{a}dulescu}
\address[V.D. R\u{a}dulescu]{Faculty of Applied Mathematics, AGH University of Science and Technology, 30-059 Krak\'ow, Poland 
\& Institute of Mathematics, Physics and Mechanics, 1000 Ljubljana, Slovenia
\& Department of Mathematics, University of Craiova, 200585 Craiova, Romania}
\email{\tt radulescu@inf.ucv.ro}

\author[D.D. Repov\v{s}]{D.D. Repov\v{s}}
\address[D.D. Repov\v{s}]{Faculty of Education and Faculty of Mathematics and Physics, University of Ljubljana \& Institute of Mathematics, Physics and Mechanics, 1000 Ljubljana, Slovenia}
\email{\tt dusan.repovs@guest.arnes.si}

\keywords{Nonhomogeneous differential operator, nonlinear regularity theory, truncation, strong comparison principle, positive solutions\\
\phantom{aa} 2010 Mathematics Subject Classification: 35J75, 35J92, 35P30}

\begin{abstract}
We consider a nonlinear Robin problem driven by the sum of
$p$-Laplacian
and
$q$-Laplacian
(i.e. the $(p,q)$-equation).
In the reaction there are
competing
effects of a singular term and
 a parametric perturbation $\lambda f(z,x)$, which is Carath\'eodory and  $(p-1)$-superlinear
 at
  $x\in\RR,$ without satisfying the Ambrosetti-Rabinowitz condition. Using variational tools,
   together with truncation and comparison techniques, we prove a bifurcation-type result describing the changes in the set of positive solutions as the
   parameter
    $\lambda>0$ varies.
\end{abstract}

%\date{\today}

\maketitle

\section{Introduction}

Let $\Omega\subseteq\RR^N$ be a bounded domain with a $C^2$-boundary $\partial\Omega$. In this paper, we study the following nonlinear Robin problem
\begin{equation}
	\left\{
		\begin{array}{ll}
			-\Delta_p u(z)-\Delta_q u(z) + \xi(z) u(z)^{p-1} = u(z)^{-\gamma} + \lambda f(z,u(z))\ \mbox{in}\ \Omega,\\
			\frac{\partial u}{\partial n_{pq}} + \beta(z) u^{p-1}=0\ \mbox{on}\ \partial\Omega,\ u>0,\ \lambda>0,\ 0<\gamma<1,\ 1<q<p.
		\end{array}
	\right\}\tag{$P_{\lambda}$}\label{eqp}
\end{equation}

For every $r\in (1,\infty),$
 we denote by $\Delta_r$ the $r$-Laplace differential operator defined by
$$
\Delta_r u={\rm div}\,(|Du|^{r-2}Du)\ \mbox{for all}\ u\in W^{1,r}(\Omega).
$$

The differential operator of \eqref{eqp} is the sum of
 $p$-Laplacian and
  $q$-Laplacian. Such an operator is not homogeneous and
  it
   appears in the mathematical models of various physical processes. We mention the works of Cherfils \& Ilyasov \cite{1} (reaction-diffusion systems) and Zhikov \cite{18} (elasticity theory). The potential function $\xi\in L^\infty(\Omega)$ satisfies
    $\xi(z)\geq0$ for almost all $z\in\Omega$. In the reaction
   (the right-hand side of \eqref{eqp}),
   we have the combined effects of two nonlinearities of different nature. One nonlinearity is the singular term $u^{-\gamma}$ and the other nonlinearity is the parametric term $\lambda f(z,x)$, where $f(z,x)$ is a Carath\'eodory function (that is, for all $x\in\RR,$ the mapping $z\mapsto f(z,x)$ is measurable and for almost all $z\in\Omega,$ the mapping $ x\mapsto f(z,x)$ is continuous), which exhibits $(p-1)$-superlinear growth near $+\infty$ but without satisfying the usual in such cases Ambrosetti-Rabinowitz condition (the AR-condition for short). In the boundary condition, $\frac{\partial u}{\partial n_{pq}}$ denotes the conormal derivative corresponding to the $(p,q)$-Laplace differential operator. Then according to the nonlinear Green's identity (see Gasinski \& Papageorgiou \cite[p. 210]{2}), we have
$$
\frac{\partial u}{\partial n_{pq}} = (|Du|^{p-2}Du + |Du|^{q-2}Du,n)\ \mbox{for all}\ u\in C^1(\overline\Omega),
$$
with $n(\cdot)$ being the outward unit normal on $\partial\Omega$. The boundary coefficient $\beta\in C^{0,\alpha}(\partial\Omega)$ (with $0<\alpha<1$) satisfies $\beta(z)\geq0$ for all $z\in\partial\Omega$.

In the past, nonlinear singular problems were studied only in the context of Dirichlet equations driven by the $p$-Laplacian (a homogeneous differential operator). We mention the works of Giacomoni, Schindler \& Taka\v c \cite{5}, Papageorgiou, R\u adulescu \& Repov\v{s} \cite{10, 11},  Papageorgiou \& Smyrlis \cite{13}, Papageorgiou \& Winkert \cite{14},
and
Perera \& Zhang \cite{16}.
Nonlinear elliptic problems with unbalanced growth have been studied recently by Papageorgiou, R\u adulescu and Repov\v{s} \cite{prrzamp, prrpams, prrblms}. Double-phase transonic flow problems with variable growth have been considered by Bahrouni, R\u adulescu and Repov\v{s} \cite{bahr}.
A comprehensive study of semilinear singular problems can be found in the book of Ghergu \& R\u adulescu \cite{4}.

Using variational methods based on the critical point theory together with suitable truncation and comparison techniques, we prove a bifurcation type result, describing in a precise way the dependence of the set of positive solutions of \eqref{eqp} on the parameter. So, we produce a critical parameter value $\lambda^*>0$ such that for all $\lambda\in(0,\lambda^*),$ problem \eqref{eqp} has at least two positive solutions, for $\lambda=\lambda^*$ problem \eqref{eqp} has at least one positive solution and for $\lambda>\lambda^*$ there are no positive solutions for problem \eqref{eqp}.

\section{Mathematical background and hypotheses}
Let $X$ be a Banach space. By $X^*$ we denote the topological dual of $X$. Given $\varphi\in C^1(X,\RR)$, we say that $\varphi(\cdot)$ satisfies the ``C-condition", if the following property holds
$$
\begin{array}{ll}
\mbox{``Every sequence}\ \{u_n\}_{n\geq1}\subseteq X\ \mbox{such that} \\
\{\varphi(u_n)\}_{n\geq1}\subseteq\RR\ \mbox{is bounded} \
\mbox{and}\ (1+||u_n||)\varphi'(u_n)\rightarrow0\ \mbox{in}\ X^*\ \mbox{as}\ n\rightarrow\infty,\\
\mbox{admits a strongly convergent subsequence."}
\end{array}
$$

This is a compactness type condition on the functional $\varphi$, which leads to the minimax theory of the critical values of $\varphi(\cdot)$.

The two main spaces in the analysis of problem \eqref{eqp} are the Sobolev space $W^{1,p}(\Omega)$ and the Banach space $C^1(\overline\Omega)$. By $||\cdot||$ we denote the norm on the Sobolev space $W^{1,p}(\Omega)$. We have
$$
||u||=\left[||u||^p_p + ||Du||^p_p\right]^\frac{1}{p}\ \mbox{for all}\ u\in W^{1,p}(\Omega).
$$

The Banach space $C^1(\overline\Omega)$ is ordered with positive (order) cone given by
$$
C_+=\{u\in C^1(\overline\Omega):u(z)\geq0\ \mbox{for all}\ z\in\overline\Omega\}.
$$

This cone has a nonempty interior
$$
D_+ = \{u\in C_+:u(z)>0\ \mbox{for all}\ z\in\overline\Omega\}.
$$

We will also consider another order cone (closed convex cone) in $C^1(\overline{\Omega})$, namely the cone
$$\hat{C}_+=\left\{u\in C^1(\overline\Omega):u(z)\geq0\ \mbox{for all}\ z\in\overline\Omega,\ \frac{\partial u}{\partial n}{|_{\partial\Omega\cap u^{-1}(0)}}\leq 0\right\}.
$$
This cone has a nonempty interior
$${\rm int}\, \hat{C}_+=\left\{u\in C^1(\overline\Omega):u(z)>0\ \mbox{for all}\ z\in\Omega,\ \frac{\partial u}{\partial n}{|_{\partial\Omega\cap u^{-1}(0)}}< 0\right\}.
$$
%Note that $D_+$ is the interior of $C_+$ when the latter is endowed with the weaker $C(\overline\Omega)$-norm topology.

To take care of the Robin boundary condition, we will also use the ``boundary" Lebesgue spaces $L^q(\partial\Omega)$ $(1\leq q\leq\infty)$. More precisely, on $\partial\Omega$ we consider the $(N-1)$-dimensional Hausdorff (surface) measure $\sigma(\cdot)$. Using this measure on $\partial\Omega$ we can define in the usual way the Lebesgue spaces $L^q(\partial\Omega)$ $(1\leq q\leq\infty)$. We know that there exists a continuous, linear map $\gamma_0: W^{1,p}(\Omega)\rightarrow L^p(\partial\Omega)$, known as the ``trace map" such that
$$
\gamma_0(u)=u|_{\partial\Omega}\ \mbox{for all}\ u\in W^{1,p}(\Omega)\cap C(\overline\Omega).
$$

So, the trace map extends the notion of boundary values to all Sobolev functions. We have
$$
{\rm im}\,\gamma_0= W^{\frac{1}{p'},p}(\partial\Omega)\ (\frac{1}{p}+\frac{1}{p'}=1)\ \mbox{and}\ {\rm ker}\,\gamma_0 = W^{1,p}_0(\Omega).
$$

The trace map $\gamma_0$ is compact into $L^q(\partial\Omega)$ for all $q\in \left[1,\frac{(N-1)p}{N-p}\right)$ if $N>p$ and into $L^q(\partial\Omega)$ for all $q\geq1$ if $p\geq N$. In the sequel, for the sake of notational simplicity, we drop the use of the trace map $\gamma_0(\cdot)$. All restrictions of Sobolev functions on $\partial\Omega$ are understood in the sense of traces.

For every $r\in(1,+\infty)$, let $A_r:W^{1,r}(\Omega)\rightarrow W^{1,r}(\Omega)^*$ be defined by
$$
\langle A_r(u),h\rangle = \int_\Omega|Du|^{r-2}(Du,Dh)_{\RR^N}dz\ \mbox{for all}\ u,h\in W^{1,r}(\Omega).
$$

The following proposition summarizes the main properties of this map (see Gasinski \& Papageorgiou \cite{2}).

\begin{prop}\label{prop1}
	The map $A_r(\cdot)$ is bounded (that is, maps bounded sets to bounded sets) continuous, monotone (hence maximal monotone, too) and of type $(S)_+$, that is, if $u_n\xrightarrow{w}u$ in $W^{1,r}(\Omega)$ and $\limsup_{n\rightarrow\infty}\langle A_r(u_n),u_n-u\rangle$, then
	$
	u_n\rightarrow u\ \mbox{in}\ W^{1,r}(\Omega).
	$
\end{prop}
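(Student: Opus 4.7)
My plan is to verify each of the four assertions in the order stated, with the real work concentrated on the $(S)_+$ property.

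Boundedness and continuity are routine. For boundedness, the Cauchy--Schwarz inequality on $\mathbb{R}^N$ followed by H\"older's inequality on $\Omega$ yields
$$
|\langle A_r(u),h\rangle|\leq \int_\Omega |Du|^{r-1}|Dh|\,dz\leq \|Du\|_r^{r-1}\|Dh\|_r\leq \|u\|^{r-1}\|h\|,
$$
so $\|A_r(u)\|_*\leq \|u\|^{r-1}$. For continuity, if $u_n\to u$ in $W^{1,r}(\Omega)$, then $Du_n\to Du$ in $L^r(\Omega,\mathbb{R}^N)$; since the map $\xi\mapsto |\xi|^{r-2}\xi$ is continuous on $\mathbb{R}^N$ with $r'$-power growth, the associated Nemytskii operator is continuous from $L^r(\Omega,\mathbb{R}^N)$ into $L^{r'}(\Omega,\mathbb{R}^N)$, and applying H\"older once more delivers $A_r(u_n)\to A_r(u)$ in $W^{1,r}(\Omega)^*$.

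For monotonicity, I will invoke the standard pointwise inequality
$$
(|\xi|^{r-2}\xi-|\eta|^{r-2}\eta,\xi-\eta)_{\mathbb{R}^N}\geq 0\qquad\text{for all }\xi,\eta\in\mathbb{R}^N,
$$
whose integration over $\Omega$ gives $\langle A_r(u)-A_r(v),u-v\rangle\geq 0$. Since $A_r$ is everywhere defined, monotone and continuous (hence hemicontinuous) on the reflexive Banach space $W^{1,r}(\Omega)$, the Browder--Minty theorem yields maximal monotonicity.

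The property $(S)_+$ is the main point. Assume $u_n\xrightarrow{w}u$ in $W^{1,r}(\Omega)$ with $\limsup_n\langle A_r(u_n),u_n-u\rangle\leq 0$. Weak convergence gives $\langle A_r(u),u_n-u\rangle\to 0$, and subtracting this from the hypothesis while using monotonicity forces
$$
\lim_{n\to\infty}\int_\Omega\bigl(|Du_n|^{r-2}Du_n-|Du|^{r-2}Du,\, Du_n-Du\bigr)_{\mathbb{R}^N}\,dz=0.
$$
Now I would apply the quantitative monotonicity (Simon-type) estimates: for $r\geq 2$ one has $(|\xi|^{r-2}\xi-|\eta|^{r-2}\eta,\xi-\eta)\geq c_r|\xi-\eta|^r$, which directly yields $Du_n\to Du$ in $L^r(\Omega,\mathbb{R}^N)$. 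For $1<r<2$ the pointwise estimate degenerates to $(|\xi|^{r-2}\xi-|\eta|^{r-2}\eta,\xi-\eta)\geq c_r|\xi-\eta|^2(|\xi|+|\eta|)^{r-2}$, and I would recover the $L^r$-convergence of the gradients through a H\"older interpolation against the uniform $L^r$-bound on $|Du_n|+|Du|$ furnished by weak convergence. This case is where I anticipate the only real technical difficulty. Finally, the compact embedding $W^{1,r}(\Omega)\hookrightarrow L^r(\Omega)$ (Rellich--Kondrachov) promotes the weak convergence $u_n\xrightarrow{w}u$ to strong convergence in $L^r(\Omega)$, and combining this with $Du_n\to Du$ in $L^r(\Omega,\mathbb{R}^N)$ concludes $u_n\to u$ in $W^{1,r}(\Omega)$.
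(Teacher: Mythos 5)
Your proof is correct, but note that the paper itself offers no argument for this proposition at all: it is quoted as a known result with a citation to Gasinski \& Papageorgiou \cite{2} (the statement there even has a typo, the missing ``$\leq 0$'' in the $(S)_+$ hypothesis, which you silently and correctly repaired). So the comparison is with the standard textbook proof rather than with anything in the paper. Your route to the $(S)_+$ property --- reduce to $\langle A_r(u_n)-A_r(u),u_n-u\rangle\to 0$ via monotonicity, then apply the Simon-type pointwise estimates, split into the cases $r\geq 2$ and $1<r<2$, and in the singular case recover $Du_n\to Du$ in $L^r(\Omega,\RR^N)$ by H\"older against the uniform $L^r$-bound --- is sound, and the two-case treatment is exactly where the care is needed (one should also note that on the set where $|Du_n|+|Du|=0$ the degenerate estimate is read as $0\geq 0$). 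The proof more commonly found in the cited reference is softer: from $\langle A_r(u_n),u_n-u\rangle\to 0$ and H\"older one gets $\|Du_n\|_r^{r-1}\left(\|Du_n\|_r-\|Du\|_r\right)\to 0$, hence $\|Du_n\|_r\to\|Du\|_r$, and then $Du_n\xrightarrow{w}Du$ in $L^r(\Omega,\RR^N)$ together with norm convergence and the uniform convexity of $L^r$ (Kadec--Klee property) yields strong convergence of the gradients, with no case distinction on $r$. Your argument buys explicit quantitative control (a rate in terms of the duality pairing), while the uniform-convexity argument is shorter and avoids the pointwise inequalities; both finish identically via Rellich--Kondrachov.
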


Evidently, the $(S)_+$-property is useful in verifying the C-condition.

Now we introduce the conditions on the potential function $\xi(\cdot)$ and on the boundary coefficient $\beta(\cdot)$.

\smallskip
$H(\xi)$: $\xi\in L^\infty(\Omega)$ and $\xi(z)\geq0$ for almost all $z\in\Omega$.

\smallskip
$H(\beta)$: $\beta\in C^{0,\alpha}(\partial\Omega)$ with $0<\alpha<1$ and $\beta(z)\geq0$ for all $z\in\partial\Omega$.

\smallskip
$H_0$: $\xi\not\equiv0$ or $\beta\not\equiv0$.

\begin{remark}\label{rem1}
	When $\beta\equiv0$ we have the usual Neumann problem.
\end{remark}

The next two propositions can be found in Papageorgiou \& R\u adulescu \cite{9}.

\begin{prop}\label{prop2}
	If $\xi\in L^\infty(\Omega)$, $\xi(z)\geq0$ for almost all $z\in\Omega$ and $\xi\not\equiv0$, then $c_0||u||^p\leq ||Du||^p_p + \int_\Omega \xi(z)|u|^pdz$ for some $c_0>0$ and all $u\in W^{1,p}(\Omega)$.
\end{prop}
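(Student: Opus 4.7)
The plan is to argue by contradiction using the compact embedding $W^{1,p}(\Omega)\hookrightarrow L^p(\Omega)$ (Rellich--Kondrachov), which is available since $\Omega$ is a bounded $C^2$-domain. Suppose the claimed Poincar\'e-type inequality fails. Then for every $n\ge 1$ one can pick $u_n\in W^{1,p}(\Omega)$ with $\|u_n\|=1$ and
$$
\|Du_n\|_p^p+\int_\Omega \xi(z)|u_n|^p\,dz<\frac{1}{n}.
$$

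Next I would extract a subsequence, still denoted $\{u_n\}$, such that $u_n\xrightarrow{w} u$ in $W^{1,p}(\Omega)$ and $u_n\to u$ in $L^p(\Omega)$ (by compactness). From the bound above, $Du_n\to 0$ in $L^p(\Omega,\RR^N)$ strongly, so in fact $u_n\to u$ in $W^{1,p}(\Omega)$ (strong in the Sobolev norm), hence $\|u\|=1$ and $Du\equiv 0$. Since $\Omega$ is connected, $u\equiv c$ is a nonzero constant.

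Finally, letting $n\to\infty$ in $\int_\Omega \xi(z)|u_n|^p\,dz<1/n$, the strong $L^p$-convergence together with $\xi\in L^\infty(\Omega)$ yields $\int_\Omega \xi(z)|c|^p\,dz=0$. Because $\xi\ge 0$ a.e.\ and $\xi\not\equiv 0$, the set $\{\xi>0\}$ has positive Lebesgue measure, forcing $c=0$. This contradicts $\|u\|=1$, which also gives $|c|^p\,|\Omega|=1$. The constant $c_0>0$ is then obtained by the usual argument: the failure of the inequality for all $c_0>0$ produces exactly such a sequence, so some $c_0>0$ must work.

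I expect no serious obstacle here; the only delicate point is justifying the upgrade from weak to strong convergence in $W^{1,p}(\Omega)$, which follows immediately once one notes that $\|Du_n\|_p\to 0$ and $\|u_n\|_p\to\|u\|_p$ (via compactness), so $\|u_n\|\to\|u\|$ and the uniform convexity of $W^{1,p}(\Omega)$ (or the direct computation) converts weak convergence plus norm convergence into strong convergence. The hypothesis $H_0$ (i.e.\ $\xi\not\equiv 0$) is used precisely in the final step to rule out a nontrivial constant limit; without it the constant functions would form a nontrivial kernel and the inequality would fail, which is why the companion version involving the boundary term $\int_{\partial\Omega}\beta\,|u|^p\,d\sigma$ is needed when $\xi\equiv 0$ but $\beta\not\equiv 0$.
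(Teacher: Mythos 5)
Your argument is correct and complete. Note that the paper does not actually prove Proposition \ref{prop2}; it quotes it from Papageorgiou \& R\u{a}dulescu \cite{9}, and your normalization-plus-compactness contradiction is precisely the standard proof of this inequality found there: homogeneity of degree $p$ reduces the failure of the inequality to a sequence with $||u_n||=1$ and $||Du_n||_p^p+\int_\Omega\xi(z)|u_n|^p dz<\frac{1}{n}$; Rellich--Kondrachov together with $||Du_n||_p\rightarrow0$ upgrades weak to strong $W^{1,p}(\Omega)$-convergence; connectedness of the (bounded) domain $\Omega$ makes the limit a constant $c$ with $|c|^p|\Omega|=1$; and $\int_\Omega\xi(z)dz>0$ (from $\xi\geq0$, $\xi\not\equiv0$) forces $c=0$, a contradiction. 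Two cosmetic remarks: the appeal to uniform convexity is unnecessary, since both components $u_n\rightarrow u$ in $L^p(\Omega)$ and $Du_n\rightarrow0$ in $L^p(\Omega,\RR^N)$ converge strongly, so the Sobolev norm converges by direct computation (as you acknowledge); and $\xi\not\equiv0$ is a hypothesis of the proposition itself rather than an instance of $H_0$, which is the weaker disjunction ``$\xi\not\equiv0$ or $\beta\not\equiv0$'' (your closing comment about Proposition \ref{prop3} covering the case $\xi\equiv0$, $\beta\not\equiv0$ is exactly the right way to see why the disjunction suffices for \eqref{eq1}).
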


\begin{prop}\label{prop3}
	If $\beta\in L^\infty(\partial\Omega),\ \beta(z)\geq0$ for $\sigma$-almost all $z\in\partial\Omega$ and $\beta\not\equiv0$, then $c_1||u||^p\leq ||Du||^p_p + \int_{\partial\Omega}\beta(z)|u|^pd\sigma$ for some $c_1>0$
	and all $u\in W^{1,p}(\Omega)$.
\end{prop}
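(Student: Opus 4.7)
The natural approach is a proof by contradiction combined with the compactness of the trace map and the Rellich--Kondrachov embedding. The plan is to assume the inequality fails for every constant $c_1>0$, extract a contradicting sequence by homogeneity, and then use weak compactness plus the strong convergence in $L^p(\Omega)$ and $L^p(\partial\Omega)$ to force the weak limit to be identically zero, contradicting the normalization.

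More concretely, first I would suppose for contradiction that no such $c_1>0$ exists. Then for each $n\in\NN$ I could select $u_n\in W^{1,p}(\Omega)$ with
\[
\|Du_n\|_p^p+\int_{\partial\Omega}\beta(z)|u_n|^p\,d\sigma<\frac{1}{n}\|u_n\|^p.
\]
By $p$-homogeneity of both sides one may normalize $\|u_n\|=1$, so that
\[
\|Du_n\|_p^p+\int_{\partial\Omega}\beta(z)|u_n|^p\,d\sigma\longrightarrow 0,\qquad \|u_n\|=1.
\]
The bounded sequence $\{u_n\}$ admits a subsequence with $u_n\xrightarrow{w} u$ in $W^{1,p}(\Omega)$, and by the Rellich--Kondrachov embedding and the compactness of the trace map I get $u_n\to u$ in $L^p(\Omega)$ and $u_n|_{\partial\Omega}\to u|_{\partial\Omega}$ in $L^p(\partial\Omega)$.

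The next step is to identify the limit. Weak lower semicontinuity of the seminorm $\|D\cdot\|_p$ together with $\|Du_n\|_p\to 0$ gives $\|Du\|_p=0$, hence $u\equiv c\in\RR$ is constant. From $\int_{\partial\Omega}\beta|u_n|^p\,d\sigma\to 0$ and the strong $L^p(\partial\Omega)$ convergence one obtains $|c|^p\int_{\partial\Omega}\beta(z)\,d\sigma=0$, and since $\beta\ge 0$ with $\beta\not\equiv 0$ this forces $c=0$, i.e.\ $u\equiv 0$. Combined with $\|Du_n\|_p\to 0$ and the strong $L^p(\Omega)$ convergence $u_n\to 0$, one concludes $\|u_n\|\to 0$, contradicting $\|u_n\|=1$.

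The only mildly delicate point is ensuring that the subsequence converges strongly on the boundary; this is handled by invoking the compactness of $\gamma_0$ into $L^p(\partial\Omega)$ recalled earlier in the excerpt. Everything else is essentially bookkeeping, so I do not anticipate a serious obstacle beyond correctly combining the two compactness statements.
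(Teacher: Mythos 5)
Your proof is correct: the normalization-by-homogeneity, weak compactness, Rellich--Kondrachov plus trace compactness, identification of the weak limit as a constant, and elimination of that constant via $\beta\geq 0$, $\beta\not\equiv 0$ all fit together without gaps (note only that connectedness of the domain $\Omega$ is what lets you pass from $Du=0$ to $u\equiv c$). The paper itself gives no proof of Proposition \ref{prop3} --- it cites Papageorgiou \& R\u adulescu \cite{9} --- and the argument used there for inequalities of this type is essentially the same contradiction-and-compactness scheme you wrote, so your proposal matches the intended proof.
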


In what follows, let $\gamma_p:W^{1,p}(\Omega)\rightarrow\RR$ be defined by
$$
\gamma_p(u) = ||Du||^p_p + \int_\Omega\xi(z)|u|^pdz + \int_{\partial\Omega}\beta(z)|u|^pd\sigma\ \mbox{for all}\ u\in W^{1,p}(\Omega).
$$

If hypotheses $H(\xi), H(\beta), H_0$ hold, then from Propositions \ref{prop2} and \ref{prop3} we
can
 infer that
\begin{equation}\label{eq1}
	c_2||u||^p \leq \gamma_p(u)\ \mbox{for some}\ c_2>0\ \mbox{and all}\ u\in W^{1,p}(\Omega).
\end{equation}

As we
have  already mentioned in the introduction, our approach involves also truncation and comparison techniques. So, the next strong comparison principle, a slight variant of Proposition 4 of Papageorgiou \& Smyrlis \cite{13}, will be useful.

\begin{prop}\label{prop4} If $\hat\xi\in L^\infty(\Omega)$ with $\hat\xi(z)\geq0$ for almost all $z\in\Omega, h_1, h_2\in L^\infty(\Omega)$,
$$
0<c_3\leq h_2(z)-h_1(z)\ \mbox{for almost all}\ z\in\Omega,
$$
and the functions $u_1,u_2\in C^1(\overline\Omega)\backslash\{0\}, u_1\leq u_2, u_1^{-\gamma}, u_2^{-\gamma}\in L^\infty(\Omega)$ satisfy
$$
\begin{array}{ll}
-\Delta_p u_1 - \Delta_q u_1 + \hat\xi(z) u_1^{p-1} - u_1^{-\gamma}=h_1\ \mbox{for almost all}\ z\in\Omega,\\
-\Delta_p u_2 - \Delta_q	 u_2 + \hat\xi(z) u_2^{p-1} - u_2^{-\gamma}=h_2\ \mbox{for almost all}\ z\in\Omega,
\end{array}
$$
then $u_2-u_1\in {\rm int}\,\hat{C}_+$.
\end{prop}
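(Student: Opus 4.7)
The plan is to subtract the two equations, linearize the resulting identity, and apply a strong maximum principle together with a Hopf-type boundary point lemma to the linear (degenerate) elliptic equation satisfied by $v:=u_2-u_1$.

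First I observe that $v\in C^1(\overline\Omega)$ with $v\ge 0$, and that the hypothesis $u_i^{-\gamma}\in L^\infty(\Omega)$ forces $\min_{\overline\Omega}u_i\ge m$ for some $m>0$. Subtracting the two PDEs and regrouping the non-negative zeroth-order contributions on the left yields
\[
-(\Delta_p u_2-\Delta_p u_1)-(\Delta_q u_2-\Delta_q u_1)+\hat\xi(z)\bigl(u_2^{p-1}-u_1^{p-1}\bigr)+\bigl(u_1^{-\gamma}-u_2^{-\gamma}\bigr)=h_2-h_1\ge c_3>0.
\]
Two applications of the mean value theorem give $u_2^{p-1}-u_1^{p-1}=\alpha_1(z)v$ and $u_1^{-\gamma}-u_2^{-\gamma}=\alpha_2(z)v$ with $\alpha_1,\alpha_2\in L^\infty(\Omega)$ non-negative (the bound on $\alpha_2$ uses $u_i\ge m$). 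For $r\in\{p,q\}$ the pointwise identity
\[
|Du_2|^{r-2}Du_2-|Du_1|^{r-2}Du_1=M_r(z)Dv,
\]
where
\[
M_r(z)=\int_0^1\!\Bigl(|Du_t|^{r-2}I+(r-2)|Du_t|^{r-4}Du_t\otimes Du_t\Bigr)dt,\qquad u_t:=tu_2+(1-t)u_1,
\]
produces a symmetric positive semidefinite matrix. Consequently the equation for $v$ takes the linear divergence form
\[
-\operatorname{div}\bigl(A(z)Dv\bigr)+c(z)v=h_2-h_1\ge c_3>0,
\]
with $A:=M_p+M_q$ and $c:=\hat\xi\alpha_1+\alpha_2\in L^\infty(\Omega)$, $c\ge 0$.

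Next, since $cv\ge 0$ and the right-hand side is uniformly positive, the strong maximum principle rules out any interior zero minimum of $v$, so $v>0$ in $\Omega$. At any point $z_0\in\partial\Omega$ with $v(z_0)=0$, Hopf's boundary point lemma applied to the same equation forces $\tfrac{\partial v}{\partial n}(z_0)<0$. Combining these two facts yields $v=u_2-u_1\in\operatorname{int}\hat C_+$, which is the desired conclusion.

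The main obstacle is that $A(z)$ can fail to be uniformly elliptic: it degenerates at points where both $Du_1$ and $Du_2$ vanish (visible already in the $p$-Laplacian piece when $p>2$). Because of this, one cannot invoke the classical linear strong maximum principle directly and must work with its quasilinear/degenerate form (Pucci--Serrin, V\'azquez). This is precisely the route taken in Proposition~4 of Papageorgiou \& Smyrlis \cite{13}, whose argument adapts with only minor bookkeeping — keeping track of the singular contribution $\alpha_2$ and of the bound $c_3$ on $h_2-h_1$ — to the present slightly modified setting.
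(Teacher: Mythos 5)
The paper never actually proves this proposition: it is imported as ``a slight variant of Proposition 4 of Papageorgiou \& Smyrlis \cite{13}'' and used as a black box, so there is no in-paper argument to compare yours against, and your closing deferral to \cite{13} is in substance exactly what the authors do. Had you stopped at that citation, you would match the paper. The problem is the linearization argument you present as the actual content, which has a genuine gap at its decisive step.

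The gap is the degenerate set $\{Du_1=Du_2=0\}$. First, since only $1<q<p$ is assumed, $q$ (or both exponents) may lie in $(1,2)$; then the integrand defining $M_q$ fails to be bounded along segments $tDu_2+(1-t)Du_1$ passing near the origin, so $M_q$ is undefined on $\{Du_1=Du_2=0\}$ and unbounded near it, and $A=M_p+M_q$ need not belong to $L^\infty$. Second, even when $p,q\ge 2$, the matrix $A$ is only positive semidefinite and degenerates exactly at the points you must exclude: if $v(z_0)=0$ at an interior point, then $Dv(z_0)=0$, i.e. $Du_1(z_0)=Du_2(z_0)$, and when this common value is $0$ the classical strong maximum principle and Hopf boundary lemma (both of which require local uniform ellipticity and locally bounded coefficients) give no information near $z_0$. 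Third, the repair you invoke does not apply: the V\'azquez and Pucci--Serrin maximum principles are statements about a \emph{single} function satisfying a quasilinear differential inequality -- this is precisely how the paper uses \cite{17} in the proof of Proposition \ref{prop7} to conclude $\tilde{u}_\eta\in D_+$ -- and they say nothing about $v=u_2-u_1$, because a difference of $p$-Laplacians is not the $p$-Laplacian of the difference, nor do they cover linear equations with degenerate, possibly unbounded, measurable coefficients. Your argument is sound on the open set where $|Du_1|+|Du_2|>0$, since there $A$ is locally bounded and locally uniformly elliptic; what is missing is any treatment of common critical points on the touching set, in the interior and on the boundary. That case is the whole difficulty of a strong \emph{comparison} (as opposed to strong maximum) principle for degenerate operators: it is where the hypothesis $h_2-h_1\ge c_3>0$ must be used in an essential way (without strict separation the conclusion can fail for the $p$-Laplacian), and it is the actual content of \cite[Proposition 4]{13}, not ``minor bookkeeping'' on top of the linearization.
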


Consider a Carath\'eodory function $f_0:\Omega\times\RR\rightarrow\RR$ satisfying
$$
|f_0(z,x)|\leq a_0(z)[1+|x|^{r-1}]\ \mbox{for almost all}\ z\in\Omega \ \mbox{and all}\ x\in\RR,
$$
with $a_0\in L^\infty(\Omega)$ and $1<r\leq p^*=\left\{\begin{array}{ll}\frac{Np}{N-p}&\mbox{if}\ p<N\\+\infty &\mbox{if}\ N\leq p\end{array}\right.$ (the critical Sobolev exponent corresponding to $p$).

We set $F_0(z,x)=\int^x_0 f_0(z,s)ds$ and consider the $C^1$-functional $\varphi_0:W^{1,p}(\Omega)\rightarrow\RR$ defined by
$$
\varphi_0(u)=\frac{1}{p}\gamma_p(u) + \frac{1}{q}||Du||^q_q - \int_\Omega F_0(z,u)dz\ \mbox{for all}\ u\in W^{1,p}(\Omega)\ \mbox{(recall that $q<p$)}.
$$

The next proposition can be found in Papageorgiou \& R\u adulescu \cite{8} and essentially is an outgrowth of the nonlinear regularity theory of Lieberman \cite{6}.

\begin{prop}\label{prop5}
	If $u_0\in W^{1,p}(\Omega)$ is a local $C^1(\overline\Omega)$-minimizer of $\varphi_0$, that is, there exists $\rho_0>0$ such that
	$$
	\varphi_0(u_0)\leq\varphi_0(u_0+h)\ \mbox{for all}\ ||h||_{C^1(\overline\Omega)}\leq\rho_0,
	$$
	then $u_0\in C^{1,\alpha}(\overline\Omega)$ for some $\alpha\in(0,1)$ and $u_0$ is also a local $W^{1,p}(\Omega)$-minimizer of $\varphi_0$, that is, there exists $\rho_1>0$ such that
	$$
	\varphi_0(u_0)\leq\varphi_0(u+h)\ \mbox{for all}\ ||h||\leq\rho_1.
	$$
\end{prop}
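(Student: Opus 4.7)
My plan is to follow the classical strategy of Brezis--Nirenberg, adapted to the $(p,q)$-Laplacian with Robin boundary and the growth restriction $r\le p^*$ on $f_0$.

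First, I would establish the $C^{1,\alpha}$-regularity of $u_0$. Being a local $C^1(\overline\Omega)$-minimizer of $\varphi_0$, the function $u_0$ satisfies the Euler--Lagrange equation
$$\langle A_p(u_0),h\rangle+\langle A_q(u_0),h\rangle+\int_\Omega \xi(z)|u_0|^{p-2}u_0\,h\,dz+\int_{\partial\Omega}\beta(z)|u_0|^{p-2}u_0\,h\,d\sigma=\int_\Omega f_0(z,u_0)h\,dz$$
for all $h\in W^{1,p}(\Omega)$, with a Neumann-type Robin boundary condition. Since $f_0$ has Sobolev-subcritical growth with coefficient in $L^\infty$, a Moser iteration (in the version for Robin $(p,q)$-equations, as in Papageorgiou--R\u adulescu \cite{8} based on \cite{6}) yields $u_0\in L^\infty(\Omega)$. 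Once this bound is in hand, Lieberman's nonlinear regularity theory \cite{6} applies to the $(p,q)$-operator with the $C^{1,\alpha}$ boundary data encoded by $\beta\in C^{0,\alpha}(\partial\Omega)$, producing $\alpha\in(0,1)$ and an estimate $\|u_0\|_{C^{1,\alpha}(\overline\Omega)}\le M$.

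Next, assume for contradiction that $u_0$ is not a $W^{1,p}(\Omega)$-local minimizer. Then one can find $\varepsilon_n\downarrow0$ and, by minimizing $\varphi_0$ over the closed ball $\overline{B}_{\varepsilon_n}(u_0)$ in $W^{1,p}(\Omega)$ (coercivity and sequential weak lower semicontinuity follow from \eqref{eq1} together with the subcritical growth and the compact trace embedding), functions $v_n\in \overline{B}_{\varepsilon_n}(u_0)$ satisfying $\varphi_0(v_n)<\varphi_0(u_0)$ and $v_n\to u_0$ in $W^{1,p}(\Omega)$. Each $v_n$ solves, via a Lagrange multiplier rule, a perturbed Robin $(p,q)$-problem of the form
$$-\Delta_p v_n-\Delta_q v_n+(\xi(z)+\mu_n)|v_n|^{p-2}v_n=f_0(z,v_n)+\mu_n|v_n|^{p-2}v_n^{\text{shift}}\text{ in }\Omega,$$
where $\mu_n\ge 0$ is the multiplier associated to the spherical constraint (vanishing if $v_n$ is interior to the ball); in any case, since $v_n\to u_0$ in $W^{1,p}(\Omega)$, the multipliers remain bounded.

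Finally, I would bootstrap uniform regularity along the sequence. The uniform $W^{1,p}$-bound on $v_n$ and the subcritical growth of $f_0$ give, by the same Moser iteration as in the first step, a uniform $L^\infty(\Omega)$-bound. Feeding this back into Lieberman's estimates \cite{6}, which are uniform in terms of $\|v_n\|_\infty$ and the ellipticity/structural constants, I obtain $\alpha'\in(0,1)$ and $K>0$ such that $\|v_n\|_{C^{1,\alpha'}(\overline\Omega)}\le K$. By Arzel\`a--Ascoli, along a subsequence $v_n\to v$ in $C^1(\overline\Omega)$, and the $W^{1,p}$-convergence forces $v=u_0$. Hence $\|v_n-u_0\|_{C^1(\overline\Omega)}\to 0$, so eventually $\|v_n-u_0\|_{C^1(\overline\Omega)}<\rho_0$, which combined with $\varphi_0(v_n)<\varphi_0(u_0)$ contradicts the local $C^1$-minimality hypothesis.

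The main obstacle will be the third step: securing uniform $L^\infty$ and uniform Lieberman $C^{1,\alpha}$ bounds along the constrained minimizing sequence $\{v_n\}$ in the presence of the Lagrange multiplier and the two different growth exponents $p$ and $q$, since without uniformity the $C^1$-convergence (and hence the contradiction) collapses.
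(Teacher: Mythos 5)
The paper itself offers no proof of Proposition \ref{prop5}: it is quoted from Papageorgiou \& R\u adulescu \cite{8}, and your strategy (Euler--Lagrange equation plus Moser/Lieberman regularity, then contradiction via constrained minimization on small $W^{1,p}$-balls, Lagrange multipliers, uniform $C^{1,\alpha'}$ bounds and Arzel\`a--Ascoli) is exactly the scheme used there. So the route is the right one, but two of your steps have genuine gaps, and they sit precisely at the point you flag as ``the main obstacle''. First, your perturbed equation is mis-stated. The constraint $\|v-u_0\|\leq\varepsilon_n$ is $g(v)\leq\varepsilon_n^p/p$ with $g(v)=\frac1p\|D(v-u_0)\|_p^p+\frac1p\|v-u_0\|_p^p$, whose derivative is $h\mapsto\langle A_p(v-u_0),h\rangle+\int_\Omega|v-u_0|^{p-2}(v-u_0)h\,dz$. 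Hence the multiplier enters the \emph{principal part} of the operator, as $-\mu_n\Delta_p(v_n-u_0)$, and also perturbs the conormal boundary condition; it does not appear only through zero-order terms $(\xi(z)+\mu_n)|v_n|^{p-2}v_n$ as in your display. This matters because the entire difficulty of the uniform Lieberman estimates comes from this gradient term.

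Second, your assertion that ``since $v_n\to u_0$ in $W^{1,p}(\Omega)$, the multipliers remain bounded'' is unjustified, and your reasoning cannot be repaired as stated. Testing the multiplier equation with $v_n-u_0$ gives $\mu_n\varepsilon_n^p=-\langle\varphi_0'(v_n),v_n-u_0\rangle\leq\|\varphi_0'(v_n)\|_{*}\,\varepsilon_n$, i.e. only $\mu_n\leq o(1)\,\varepsilon_n^{1-p}$ (using $\varphi_0'(v_n)\to\varphi_0'(u_0)=0$), which is compatible with $\mu_n\to+\infty$ since $p>1$. The known proofs do not prove boundedness of $\mu_n$; instead they argue in both regimes at once by dividing the equation by $1+\mu_n$ and checking that the resulting vector field
\begin{equation*}
a_n(z,\zeta)=\frac{1}{1+\mu_n}\left(|\zeta|^{p-2}\zeta+|\zeta|^{q-2}\zeta\right)+\frac{\mu_n}{1+\mu_n}\,|\zeta-Du_0(z)|^{p-2}\left(\zeta-Du_0(z)\right)
\end{equation*}
satisfies the structure conditions of Lieberman \cite{6} with constants independent of $n$ --- this is where the Step-one conclusion $Du_0\in C^{0,\alpha}(\overline\Omega;\RR^N)$ is actually used, since it keeps the $z$-dependence H\"older and the degeneracy set $\{\zeta=Du_0(z)\}$ controlled. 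Without this case analysis (or some substitute for it), your third step, and with it the $C^1$-convergence $v_n\to u_0$ and the final contradiction, collapses. A last, minor point: the proposition as stated allows the critical case $r=p^*$, for which $u\mapsto\int_\Omega F_0(z,u)dz$ is not sequentially weakly continuous, so the existence of the constrained minimizers $v_n$ needs a separate argument; since the paper only applies the proposition to functionals with subcritical growth (see $H(f)(i)$), you should either assume $r<p^*$ explicitly or address this.
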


	The next fact about ordered Banach spaces is useful in producing upper bounds for functions and can be found in Gasinski \& Papageorgiou \cite[Problem 4.180, p. 680]{3}.

\begin{prop}\label{prop6}
	If $X$ is an ordered Banach space with positive (order) cone $K$,
	$$
	{\rm int}\, K\neq\emptyset\ \mbox{and}\ e\in {\rm int}\, K
	$$
	then for every $u\in X$ we can find $\lambda_u>0$ such that $\lambda_u e-u\in K$.
\end{prop}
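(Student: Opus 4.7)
The plan is to exploit the fact that $e$ lies in the interior of the cone $K$, so some open ball around $e$ is contained in $K$, and then to use the cone (positive homogeneity) property to scale an arbitrary vector $u$ into that ball.

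Concretely, since $e \in \operatorname{int} K$, I would fix a radius $r>0$ such that the open ball $B_r(e) = \{v \in X : \|v-e\| < r\}$ is contained in $K$. Given $u \in X$, I want to choose $\lambda_u>0$ so that $\lambda_u e - u \in K$. Dividing through by $\lambda_u$ (which is allowed because $K$ is a cone, hence invariant under multiplication by positive scalars), this is equivalent to requiring
\[
e - \frac{u}{\lambda_u} \in K.
\]
So it suffices to arrange that $e - u/\lambda_u$ lies in $B_r(e)$, i.e.\ $\|u\|/\lambda_u < r$.

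Thus I would set, for example,
\[
\lambda_u = \frac{\|u\|+1}{r}\,,
\]
which is strictly positive and handles the trivial case $u=0$ simultaneously. Then $\|u/\lambda_u\| = r\|u\|/(\|u\|+1) < r$, so $e - u/\lambda_u \in B_r(e) \subseteq K$, and multiplying by the positive scalar $\lambda_u$ yields $\lambda_u e - u \in K$, as required.

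There is essentially no obstacle here: the statement is a direct consequence of the openness of $\operatorname{int} K$ around $e$ together with the cone property of $K$. The only minor care needed is to cover $u=0$ (handled automatically by the $+1$ in the definition of $\lambda_u$) and to observe that the positive scaling preserves membership in $K$, which is immediate from the definition of a cone.
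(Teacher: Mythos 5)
Your proof is correct: the choice $\lambda_u=(\|u\|+1)/r$ guarantees $\|u/\lambda_u\|<r$, hence $e-u/\lambda_u\in B_r(e)\subseteq K$, and positive homogeneity of the cone gives $\lambda_u e-u\in K$; the $+1$ correctly avoids the boundary case and covers $u=0$. Note that the paper itself does not prove this proposition at all --- it simply cites Gasinski \& Papageorgiou, \emph{Exercises in Analysis. Part 2} (Problem 4.180, p.~680) --- so there is no in-paper argument to compare against; your ball-scaling argument is the standard, self-contained proof that the cited exercise calls for, and it is exactly the right level of generality (it uses nothing about $K$ beyond being a cone with nonempty interior).
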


Under hypotheses $H(\xi), H(\beta), H_0$, the differential operator $ u\mapsto -\Delta_p u + \xi(z)|u|^{p-2}u$ with the Robin boundary condition, has a principal eigenvalue $\hat\lambda_1(p)>0$ which is isolated, simple and admits the following variational characterization:
\begin{equation}\label{eq2}
	\hat\lambda_1(p)=\inf\left\{\frac{\gamma_p(u)}{||u||^p_p}:u\in W^{1,p}(\Omega),u\neq0\right\}.
\end{equation}

The infimum is realized on the corresponding one-dimensional eigenspace, the elements of which have fixed sign. By $\hat{u}_1(p)$ we denote the positive, $L^p$-normalized (that is, $||\hat{u}_1(p)||_p=1$) eigenfunction corresponding to $\hat\lambda_1(p)>0$. The nonlinear Hopf
 theorem (see, for example, Gasinski \& Papageorgiou \cite[p. 738]{2}) implies that $\hat{u}_1(p)\in D_+$.

Let us fix some basic notation which we will use throughout this work. So, if $x\in\RR$, we set $x^\pm=\max\{\pm x,0\}$ and the for $u\in W^{1,p}(\Omega)$ we define $u^\pm(z)=u(z)^\pm$ for all $z\in\Omega$. We know that
$$
u^\pm\in W^{1,p}(\Omega),\ u=u^+-u^-,\ |u|=u^++u^-.
$$

If $\varphi\in C^1(W^{1,p}(\Omega),\RR)$, then by $K_\varphi$ we denote the critical set of $\varphi$, that is,
$$
K_\varphi = \{u\in W^{1,p}(\Omega):\varphi'(u)=0\}.
$$

Also, if $u,y\in W^{1,p}(\Omega)$, with $u\leq y$, then we define
\begin{eqnarray*}
	&&[u,y]=\{h\in W^{1,p}(\Omega): u(z)\leq h(z)\leq y(z)\ \mbox{for almost all}\ z\in\Omega\},\\
	&&\left[u\right) = \{h\in W^{1,p}(\Omega): u(z)\leq h(z)\ \mbox{for almost all}\ z\in\Omega\},\\
&&{\rm int}_{C^1(\overline\Omega)}[u,y]\ \mbox{= the interior in the $C^1(\overline\Omega)$-norm of $[u,y]\cap C^1(\overline\Omega)$}.
\end{eqnarray*}

Now we introduce our hypotheses on the perturbation $f(z,x)$.

\smallskip
$H(f)$: $f:\Omega\times\RR\rightarrow\RR$ is a Carath\'eodory function such that $f(z,0)=0$ for almost all $z\in\Omega$ and
\begin{itemize}
	\item [(i)] $f(z,x)\leq a(z)(1+x^{r-1})$ for almost all $z\in\Omega$ and all $x\geq0$ with $a\in L^\infty(\Omega), p<r<p^*$;
	\item [(ii)] if $F(z,x)=\int_0^x f(z,s)ds$, then $\lim_{x\rightarrow+\infty}\frac{F(z,x)}{x^p}=+\infty$ uniformly for almost all $z\in\Omega$;
	\item [(iii)] there exists $\tau\in((r-p)\max\left\{\frac{N}{p},1\right\},p^*)$ such that
		$$
		0<\hat\beta_0\leq\liminf_{x\rightarrow+\infty}\frac{f(z,x)x-p F(z,x)}{x^\tau}\ \mbox{uniformly for almost all}\ z\in\Omega;
		$$
	\item [(iv)] for every $\vartheta>0$, there exists $m_\vartheta>0$ such that
	$$m_\vartheta\leq f(z,x)\ \mbox{for almost all}\ z\in\Omega \ \mbox{and all}\ x\geq\vartheta;$$
	\item[(v)] for every $\rho>0$ and $\lambda>0$, there exists $\hat{\xi}_{\rho}^{\lambda}>0$ such that for almost all $z\in\Omega$, the function $x\mapsto f(z,x)+\hat\xi^\lambda_\rho x^{p-1}$ is nondecreasing on $[0,\rho]$.
\end{itemize}

\begin{remark}
	Since we are
	looking
	 for positive solutions and the above hypotheses concern the positive semiaxis, without any loss of generality we may assume that
	\begin{equation}\label{eq3}
		f(z,x) = 0\ \mbox{for almost all}\ z\in\Omega \ \mbox{and all}\ x\leq0.
	\end{equation}
\end{remark}

From hypotheses $H(f),(ii),(iii)$ it follows that
$$
\lim_{x\rightarrow+\infty}\frac{f(z,x)}{x^{p-1}} = +\infty\ \mbox{uniformly for almost all}\ z\in\Omega.
$$

Hence, for almost all $z\in\Omega$ the perturbation $f(z,\cdot)$ is $(p-1)$-superlinear near $+\infty$. However, this superlinearity of $f(z,\cdot)$ is not expressed using the well-known AR-condition. We recall that the AR-condition (unilateral version due to \eqref{eq3}) says that there exist $q>p$ and $M>0$ such that
\addtocounter{equation}{-1}
\begin{subequations}
	\begin{align}
		0&<qF(z,x)\leq f(z,x)x\ \mbox{for almost all}\ z\in\Omega \ \mbox{and all}\ x\geq M, \label{eq4a}\\
		0&<\essinf_{\Omega} F(\cdot,M). \label{eq4b}
	\end{align}
\end{subequations}

Integrating \eqref{eq4a} and using \eqref{eq4b}, we obtain the weaker condition
$$
\begin{array}{ll}
	& c_4x^q\leq F(z,x)\ \mbox{for almost all}\ z\in\Omega \ \mbox{all}\ x\geq M,\ \mbox{and some}\ c_4>0, \\
	\Rightarrow & c_4 x^{q-1}\leq f(z,x)\ \mbox{for almost all}\ z\in\Omega \ \mbox{and all}\ x\geq M.
\end{array}
$$

So, the AR-condition dictates an at least $(q-1)$-polynomial growth for $f(z,\cdot)$. Here we replace the AR-condition with hypothesis $H(f)(iii)$ which is less restrictive and permits superlinear nonlinearities with ``slower" growth near $+\infty$. For example the function
$$
f(x)=x^{p-1}\ln(1+x)\ \mbox{for all}\ x\geq0.
$$
(for the sake of simplicity we have dropped the $z$-dependence) satisfies hypotheses $H(f)$, but fails to satisfy the AR-condition.

We introduce the following sets:
$$
\begin{array}{ll}
	\mathcal{L}=\{\lambda>0:\ \mbox{problem \eqref{eqp} has a positive solution}\}, \\
	S_\lambda = \mbox{the set of positive solutions of \eqref{eqp}}.
\end{array}
$$

Also we set
$$
\lambda^*=\sup\mathcal{L}.
$$

\section{Some auxiliary Robin problems}
Let $\eta>0$. First we examine the following auxiliary Robin problem
\stepcounter{equation}
\begin{equation}\label{eq5}
	\left\{
		\begin{array}{ll}
			-\Delta_p u(z) - \Delta_q u(z) + \xi(z)u(z)^{p-1} = \eta\ \mbox{in}\ \Omega,\\
			\frac{\partial u}{\partial n_{pq}} + \beta(z) u^{p-1}=0\ \mbox{on}\ \partial\Omega,\ u>0.
		\end{array}
	\right\}
\end{equation}

\begin{prop}\label{prop7}
	If hypotheses $H(\xi), H(\beta), H_0$ hold, then for every $\eta>0$ problem \eqref{eq5} has a unique solution $\tilde{u}_\eta\in D_+$, the mapping $\eta\mapsto\tilde{u}_\eta$ is strictly increasing (that is, $\eta<\eta'\Rightarrow \tilde{u}_{\eta'}-\tilde{u}_\eta\in {\rm int}\,\hat{C}_+$) and
	$$
	\tilde{u}_\eta\rightarrow0\ \mbox{in}\ C^1(\overline\Omega)\ \mbox{as}\ \eta\rightarrow0^+.
	$$
\end{prop}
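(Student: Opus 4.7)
My plan is to obtain $\tilde u_\eta$ by direct minimization of a $C^1$-functional, promote it to $D_+$ via nonlinear regularity and the strong maximum principle, and then derive uniqueness, strict monotonicity in $\eta$, and the $C^1$-limit from comparison arguments combined with a uniform $C^{1,\alpha}$-estimate.

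\textbf{Existence and membership in $D_+$.} I would consider the $C^1$-functional
\[
\psi_\eta(u)=\frac{1}{p}\gamma_p(u)+\frac{1}{q}||Du||_q^q-\eta\int_\Omega u\,dz,\qquad u\in W^{1,p}(\Omega),
\]
which is coercive by \eqref{eq1} and sequentially weakly lower semicontinuous, so the direct method produces a global minimizer $\tilde u_\eta$. Evaluating $\psi_\eta$ along $t\hat u_1(p)$ with $t\downarrow 0$, the linear term $-\eta t\int_\Omega\hat u_1(p)\,dz$ dominates the $t^p,t^q$ terms (since $p,q>1$), so $\psi_\eta(\tilde u_\eta)<0$ and $\tilde u_\eta\neq 0$. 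Testing $\psi_\eta'(\tilde u_\eta)=0$ against $-\tilde u_\eta^-$ yields only nonnegative contributions summing to zero; in particular $\eta\int_\Omega\tilde u_\eta^-\,dz=0$ forces $\tilde u_\eta\geq 0$, so $\tilde u_\eta$ solves \eqref{eq5} weakly. Lieberman's regularity theory, applied with bounded right-hand side $\eta$, gives $\tilde u_\eta\in C^{1,\alpha}(\overline\Omega)$; the nonlinear strong maximum principle for $u\mapsto -\Delta_p u-\Delta_q u+(\xi(z)+c)u^{p-1}$ (for $c$ large enough) yields $\tilde u_\eta>0$ in $\Omega$, and inspection of the Robin boundary condition at a hypothetical boundary zero—where it would force $Du=0$ in conflict with Hopf's boundary-point lemma—places $\tilde u_\eta\in D_+$.

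\textbf{Uniqueness and monotonicity in $\eta$.} If $u_1,u_2\in D_+$ both solve \eqref{eq5}, I subtract the equations and test with $u_1-u_2$; the resulting four terms are nonnegative by the monotonicity of $A_p,A_q$ and of $s\mapsto s^{p-1}$ on $(0,\infty)$, so each vanishes. The $A_p$-part forces $Du_1=Du_2$, so $u_1-u_2$ is a constant, and hypothesis $H_0$ (through either $\xi\not\equiv 0$ or $\beta\not\equiv 0$) excludes a nonzero constant. For strict monotonicity, fix $\eta<\eta'$ and test the difference of the corresponding equations against $(\tilde u_\eta-\tilde u_{\eta'})^+$ to obtain the weak comparison $\tilde u_\eta\leq\tilde u_{\eta'}$; then the strong comparison argument underlying Proposition~\ref{prop4}, adapted to the singularity-free setting with strictly positive gap $c_3=\eta'-\eta$ in the right-hand sides, gives $\tilde u_{\eta'}-\tilde u_\eta\in{\rm int}\,\hat C_+$.

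\textbf{$C^1$-limit as $\eta\to 0^+$.} For $\eta\in(0,1]$ the just-proved monotonicity yields $||\tilde u_\eta||_\infty\leq||\tilde u_1||_\infty$, and combined with Lieberman's theorem this produces a uniform estimate $||\tilde u_\eta||_{C^{1,\alpha}(\overline\Omega)}\leq M$. The compact embedding $C^{1,\alpha}\hookrightarrow C^1$ together with the monotonicity of the net lets me pass along $\eta\to 0^+$ to a limit $\tilde u_*\in C^1(\overline\Omega)$, which weakly solves \eqref{eq5} with $\eta=0$; testing this equation against $\tilde u_*$ and invoking \eqref{eq1} forces $\tilde u_*\equiv 0$. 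The main obstacle I anticipate is the strong comparison step in the monotonicity claim: Proposition~\ref{prop4} is stated with the $u^{-\gamma}$ term present, so one has to re-run its linearization and Hopf argument without that term, relying solely on the strictly positive gap $\eta'-\eta$ in the right-hand side to activate the boundary-point lemma on $\partial\Omega\cap (\tilde u_{\eta'}-\tilde u_\eta)^{-1}(0)$.
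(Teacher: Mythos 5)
Your proposal is correct and proves all three assertions, but at several points it takes a genuinely different (and in one place more careful) route than the paper. For existence the paper does not minimize: it introduces the map $V(u)$ given by \eqref{eq6} and invokes surjectivity of continuous, coercive, strictly monotone maps to solve $V(\tilde u_\eta)=\eta$; your direct minimization of $\psi_\eta$, with the $t\hat u_1(p)$ test for nontriviality, is an equally valid variational substitute (the paper gets $\tilde u_\eta\neq 0$ for free from $V(\tilde u_\eta)=\eta\neq 0=V(0)$). Your uniqueness argument (test with $u_1-u_2$, deduce $Du_1=Du_2$, exclude a nonzero constant via $H_0$) is exactly what underlies the paper's one-line appeal to strict monotonicity of $V$. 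In the regularity chain, note that the paper first establishes $\tilde u_\eta\in L^\infty(\Omega)$ (via Proposition 7 of Papageorgiou \& R\u adulescu \cite{8}) and only then applies Lieberman \cite{6}; your phrasing skips the $L^\infty$ step, which you should state explicitly even though it is routine for a constant right-hand side. The substantive difference is the strict monotonicity step: the paper cites Proposition \ref{prop4} directly, without first establishing the ordering $\tilde u_{\eta_1}\leq\tilde u_{\eta_2}$, which is however a hypothesis of that proposition — your preliminary weak comparison fills this gap. On the other hand, the obstacle you anticipate (having to re-run the proof of Proposition \ref{prop4} without the singular term) is not actually there: since $\tilde u_{\eta_1},\tilde u_{\eta_2}\in D_+$, both $\tilde u_{\eta_i}^{-\gamma}\in L^\infty(\Omega)$, so you may rewrite each equation as
$$
-\Delta_p\tilde u_{\eta_i}-\Delta_q\tilde u_{\eta_i}+\xi(z)\tilde u_{\eta_i}^{p-1}-\tilde u_{\eta_i}^{-\gamma}=h_i:=\eta_i-\tilde u_{\eta_i}^{-\gamma},
$$
and after the weak comparison $\tilde u_{\eta_1}\leq\tilde u_{\eta_2}$ one has $h_2-h_1=\eta_2-\eta_1+\left(\tilde u_{\eta_1}^{-\gamma}-\tilde u_{\eta_2}^{-\gamma}\right)\geq\eta_2-\eta_1>0$, so Proposition \ref{prop4} applies verbatim with $c_3=\eta_2-\eta_1$. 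Finally, your $C^1$-limit argument is the paper's (uniform $C^{1,\alpha}$ bound, compact embedding into $C^1(\overline\Omega)$, monotone family, uniqueness of the trivial solution of \eqref{eq5} at $\eta=0$), with the cosmetic difference that you obtain the uniform $L^\infty$ bound from the already-proved monotonicity in $\eta$ rather than from the estimate of \cite{8}, which makes your version slightly more self-contained.
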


\begin{proof}
	Consider the map $V:W^{1,p}(\Omega)\rightarrow W^{1,p}(\Omega)^*$ defined by
	\begin{eqnarray}
		\langle V(u),h\rangle = \langle A_p(u),h\rangle + \langle A_q(u),h\rangle + \int_\Omega \xi(z)|u|^{p-2}uhdz + \int_{\partial\Omega}\beta(z)|u|^{p-2}uhd\sigma \label{eq6}\\
		\mbox{for all}\ u,h\in W^{1,p}(\Omega).\nonumber
	\end{eqnarray}

	Evidently, $V(\cdot)$ is continuous, strictly monotone (hence maximal monotone, too) and coercive (see \eqref{eq1}). Therefore $V(\cdot)$ is surjective (see Gasinski \& Papageorgiou \cite[Corollary 3.2.31, p. 319]{2}). So, we can find $\tilde{u}_\eta\in W^{1,p}(\Omega), \tilde{u}_\eta\neq0$ such that
	$$
	V(\tilde{u}_\eta)=\eta.
	$$

	The strict monotonicity of $V(\cdot)$ implies that $\tilde{u}_\eta$ is unique. We have
	\begin{equation}\label{eq7}
		\langle V(\tilde{u}_\eta),h\rangle = \eta\int_\Omega hdz\ \mbox{for all}\ h\in W^{1,p}(\Omega).
	\end{equation}

	In \eqref{eq7} we choose $h=-\tilde{u}^-_\eta\in W^{1,p}(\Omega)$. Then
	$$
	\begin{array}{ll}
		& c_2||\tilde{u}^-_\eta||^p\leq 0\ \mbox{(see \eqref{eq1})},\\
		\Rightarrow & \tilde{u}_\eta\geq 0,\ \tilde{u}_\eta\neq0.
	\end{array}
	$$

	From \eqref{eq7} we have
	\begin{equation}\label{eq8}
		\left\{
		\begin{array}{ll}
			-\Delta_p \tilde{u}_\eta(z) -\Delta_q\tilde{u}_\eta(z) + \xi(z)\tilde{u}_\eta (z)^{p-1} = \eta\ \mbox{for almost all}\ z\in\Omega,\\
			\frac{\partial\tilde{u}_\eta}{\partial n_{pq}} + \beta(z)\tilde{u}^{p-1}_\eta = 0\ \mbox{on}\ \partial\Omega.
		\end{array}
		\right\}
	\end{equation}

	From \eqref{eq8} and Proposition 7 of Papageorgiou \& R\u adulescu \cite{8} we deduce that
	$$
	\tilde{u}_\eta\in L^\infty(\Omega).
	$$

	Then the nonlinear regularity theory of Lieberman \cite{6} implies that
	$$
	\tilde{u}_\eta\in C_+\backslash\{0\}.
	$$

	From \eqref{eq8} we have
	$$
	\begin{array}{ll}
		& \Delta_p\tilde{u}_\eta(z) + \Delta_q\tilde{u}_\eta(z) \leq ||\xi||_\infty \tilde{u}_\eta(z)^{p-1}\ \mbox{for almost all}\ z\in\Omega,\\
		\Rightarrow & \tilde{u}_\eta\in D_+\ \mbox{(see Pucci \& Serrin \cite[pp. 111, 120]{17})}.
	\end{array}
	$$

	Suppose that $0<\eta_1<\eta_2$ and let $\tilde{u}_{\eta_1},\tilde{u}_{\eta_2}\in D_+$ be the corresponding solutions of problem \eqref{eq5}. We have
	$$
	\begin{array}{ll}
		& -\Delta_p\tilde{u}_{\eta_1} - \Delta_q\tilde{u}_{\eta_1} + \xi(z)\tilde{u}^{p-1}_{\eta_1} = \eta_1<\eta_2 = -\Delta_p\tilde{u}_{\eta_2} - \Delta_q\tilde{u}_{\eta_2} + \xi(z)\tilde{u}_{\eta_2}\\
		& \mbox{for almost all}\ z\in\Omega, \\
		\Rightarrow & \tilde{u}_{\eta_2} - \tilde{u}_{\eta_1}\in {\rm int}\,\hat{C}_+\ \mbox{(see Proposition \ref{prop4})}, \\
		\Rightarrow & \eta\mapsto\tilde{u}_\eta\ \mbox{is strictly increasing from $(0,+\infty)$ into $C^1(\overline\Omega)$}.
	\end{array}
	$$

	Finally, let $\eta_n\rightarrow0^+$ and let $\tilde{u}_n=\tilde{u}_{\eta_n}\in D_+$ be the corresponding solutions of \eqref{eq5}. As before, via Proposition 7 of Papageorgiou \& R\u adulescu \cite{8}, we can find $c_5>0$ such that
	$$
	||\tilde{u}_n||_\infty\leq c_5\ \mbox{for all}\ n\in\NN.
	$$

	Then from Lieberman \cite{6} we infer that there exist $\alpha\in(0,1)$ and $c_6>0$ such that
	$$
	\tilde{u}_n\in C^{1,\alpha}(\overline\Omega),\ ||\tilde{u}_n||_{C^{1,\alpha}(\overline\Omega)}\leq c_6\ \mbox{for all}\ n\in\NN.
	$$

	Exploiting the compact embedding of $C^{1,\alpha}(\overline\Omega)$ into $C^1(\overline\Omega)$, the monotonicity of the sequence $\{\tilde{u}_n\}_{n\geq1}\subseteq D_+$ and that for $\eta=0, u\equiv0$ is the only solution of \eqref{eq5} we obtain
	$$
	\tilde{u}_n\rightarrow0\ \mbox{in}\ C^1(\overline\Omega).
	$$
The proof is now complete.
\end{proof}

Using Proposition \ref{prop7}, we see that we can find $\eta_0>0$ such that
\begin{equation}\label{eq9}
	\eta\leq \tilde{u}_\eta(z)^{-\gamma}\ \mbox{for all}\ z\in\overline\Omega,\
	%\mbox{all}
	\ 0<\eta\leq\eta_0.
\end{equation}

We consider the following purely singular problem
\begin{equation}\label{eq10}
	\left\{
	\begin{array}{ll}
		-\Delta_p u(z) - \Delta_q u(z) + \xi(z) u(z)^{p-1} = u(z)^{-\gamma}\ \mbox{in}\ \Omega, \\
		\frac{\partial u}{\partial n_{pq}} + \beta(z) u^{p-1}=0\ \mbox{on}\ \partial\Omega,\ u>0,\ 0<\gamma<1.
	\end{array}
	\right\}
\end{equation}

In the first place,
 by a solution of \eqref{eq10} we understand a weak solution, that is, a function $u\in W^{1,p}(\Omega)$ such that
$$
\begin{array}{ll}
u^{-\gamma} h\in L^1(\Omega)\ \mbox{and}\ \langle A_p(u),h\rangle + \langle A_q(u),h\rangle + \int_\Omega\xi(z) u^{p-1}hdz + \int_{\partial\Omega}\beta(z)u^{p-1}hd\sigma \\=
 \int_\Omega u^{-\gamma}hdz\ \mbox{for all}\ h\in W^{1,p}(\Omega).
\end{array}
$$

In fact, using the nonlinear regularity theory, we will be able to
establish
 more regularity for the solution of \eqref{eq10}, which in fact,
  is a strong solution (that is, the equation can be interpreted pointwise almost everywhere on $\Omega$).

\begin{prop}\label{prop8}
	If hypotheses $H(\xi), H(\beta), H_0$ hold, then problem \eqref{eq10} admits a unique solution $v\in D_+$.
\end{prop}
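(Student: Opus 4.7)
The plan is to establish existence of a solution to \eqref{eq10} via direct minimization of a truncated energy functional, and uniqueness via strict monotonicity of the operator on the left-hand side.

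For existence, I would fix $\eta\in(0,\eta_0]$ and use the solution $\tilde{u}_\eta\in D_+$ of \eqref{eq5} from Proposition \ref{prop7}. By \eqref{eq9}, $\tilde{u}_\eta$ is a sub-solution of \eqref{eq10}, in the sense that
$$
-\Delta_p\tilde{u}_\eta(z)-\Delta_q\tilde{u}_\eta(z)+\xi(z)\tilde{u}_\eta(z)^{p-1}=\eta\leq\tilde{u}_\eta(z)^{-\gamma}\ \mbox{for almost all}\ z\in\Omega.
$$
I would then truncate the singular reaction from below at $\tilde{u}_\eta$ by setting
$$
\hat{k}(z,x)=\tilde{u}_\eta(z)^{-\gamma}\ \mbox{if}\ x\leq\tilde{u}_\eta(z),\quad \hat{k}(z,x)=x^{-\gamma}\ \mbox{if}\ x>\tilde{u}_\eta(z),
$$
and let $\hat{K}(z,x)=\int_0^x\hat{k}(z,s)ds$. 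Since $\tilde{u}_\eta\in D_+$, $\hat{k}$ is bounded on $\Omega\times\RR$ and continuous in $x$, with sublinear primitive, so the functional
$$
\psi(u)=\frac{1}{p}\gamma_p(u)+\frac{1}{q}||Du||^q_q-\int_\Omega\hat{K}(z,u)dz
$$
is of class $C^1$, sequentially weakly lower semicontinuous, and coercive on $W^{1,p}(\Omega)$ thanks to \eqref{eq1}; therefore $\psi$ admits a global minimizer $v$. The key comparison $v\geq\tilde{u}_\eta$ follows by testing $\psi'(v)=0$ against $(\tilde{u}_\eta-v)^+\in W^{1,p}(\Omega)$ and subtracting the sub-solution identity tested against the same function: on $\{\tilde{u}_\eta>v\}$ one has $\hat{k}(z,v)=\tilde{u}_\eta(z)^{-\gamma}\geq\eta$, so the right-hand side of the difference is nonpositive, while the left-hand side is nonnegative by monotonicity of $A_p, A_q$ together with the monotonicity of $t\mapsto\xi(z)t^{p-1}$ and $t\mapsto\beta(z)t^{p-1}$ on $(0,\infty)$; hypothesis $H_0$ then activates either the $\xi$- or $\beta$-contribution to force $(\tilde{u}_\eta-v)^+\equiv 0$. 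With the truncation now inactive, $v$ is a weak solution of \eqref{eq10}; the bound $v^{-\gamma}\leq\tilde{u}_\eta^{-\gamma}\in L^\infty(\Omega)$ and the nonlinear regularity of Lieberman \cite{6} give $v\in C^{1,\alpha}(\overline\Omega)$, and the Pucci--Serrin strong maximum principle (applied as in the proof of Proposition \ref{prop7}) places $v$ in $D_+$.

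For uniqueness, suppose $v_1,v_2\in D_+$ are two solutions; since $v_i\in D_+$, each $v_i^{-\gamma}$ is bounded, so $v_1-v_2$ is an admissible test function. Subtracting the weak formulations and testing with $v_1-v_2$ yields
\begin{align*}
0 = &\langle A_p(v_1)-A_p(v_2),v_1-v_2\rangle+\langle A_q(v_1)-A_q(v_2),v_1-v_2\rangle\\
&+\int_\Omega\xi(z)(v_1^{p-1}-v_2^{p-1})(v_1-v_2)dz+\int_{\partial\Omega}\beta(z)(v_1^{p-1}-v_2^{p-1})(v_1-v_2)d\sigma\\
&-\int_\Omega(v_1^{-\gamma}-v_2^{-\gamma})(v_1-v_2)dz.
\end{align*}
All five summands are nonnegative: the first two by monotonicity of $A_p, A_q$ (Proposition \ref{prop1}); the $\xi$- and $\beta$-integrals because $t\mapsto t^{p-1}$ is strictly increasing on $(0,\infty)$ and $\xi,\beta\geq 0$; the last because $t\mapsto -t^{-\gamma}$ is strictly increasing on $(0,\infty)$. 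Hence each vanishes. Strict monotonicity of $A_p$ then gives $Dv_1=Dv_2$ a.e., so $v_1-v_2$ is constant on $\Omega$, and $H_0$ activates either the $\xi$- or the $\beta$-term once more to pin that constant to zero, so $v_1=v_2$.

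The main obstacle is the comparison step $v\geq\tilde{u}_\eta$: below $\tilde{u}_\eta$, the minimizer $v$ satisfies an equation whose right-hand side is the truncated reaction $\hat{k}(z,v)=\tilde{u}_\eta(z)^{-\gamma}$ rather than the true $v^{-\gamma}$, so the simple monotonicity of $t\mapsto t^{-\gamma}$ does not suffice. The resolution is precisely the threshold \eqref{eq9}: on the set $\{v<\tilde{u}_\eta\}$ the truncation produces $\hat{k}(z,v)=\tilde{u}_\eta(z)^{-\gamma}\geq\eta$, matching the right-hand side of the sub-solution equation for $\tilde{u}_\eta$ and allowing the standard weak comparison argument to close.
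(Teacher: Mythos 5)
Your proposal is correct and follows essentially the same route as the paper: the same truncation of the singular term at $\tilde{u}_\eta$, direct minimization of the truncated coercive functional, and the comparison $\tilde{u}_\eta\leq v$ driven by \eqref{eq9}, after which the truncation is inactive and nonlinear regularity yields $v\in D_+$. Your uniqueness step (testing with the full difference $v_1-v_2$ and noting every resulting term is nonnegative) is only a cosmetic variant of the paper's argument, which tests with $(\hat{v}-v)^+$ and concludes by symmetry; both rest on the same monotonicity of the operator and of $t\mapsto t^{-\gamma}$.
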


\begin{proof}
	Let $\eta\in (0,\eta_0]$ (see \eqref{eq9}) and recall that $\tilde{u}_\eta\in D_+$. So $m_\eta= \min_{\overline\Omega}\tilde{u}_\eta>0$ and
	\begin{eqnarray}
		&& \eta\leq \tilde{u}^{-\gamma}_\eta\leq m^{-\gamma}_\eta\ \mbox{(see \eqref{eq9})}, \nonumber \\
		\Rightarrow && \tilde{u}^{-\gamma}_\eta\in L^\infty(\Omega). \label{eq11}
	\end{eqnarray}

	We consider the following truncation of the reaction in problem \eqref{eq10}:
	\begin{equation}\label{eq12}
		k(z,x) = \left\{
			\begin{array}{ll}
				\tilde{u}_\eta(z)^{-\gamma} & \mbox{if}\ x\leq \tilde{u}_\eta(z) \\
				x^{-\gamma} & \mbox{if}\ \tilde{u}_\eta(z)<x.
			\end{array}
		\right.
	\end{equation}

	This is a Carath\'eodory function. We set $K(z,x)=\int^x_0 k(z,s)ds$ and consider the $C^1$-functional $\Psi:W^{1,p}(\Omega)\rightarrow\RR$ defined by
	$$
	\Psi(u) = \frac{1}{p}\gamma_p(u) + \frac{1}{q}||Du||^q_q - \int_\Omega K(z,u)dz\ \mbox{for all}\ u\in W^{1,p}(\Omega).
	$$

	From \eqref{eq12} and \eqref{eq11}, we see that $\Psi(\cdot)$ is coercive. Also the Sobolev embedding theorem and the compactness of the trace map, imply that $\Psi(\cdot)$ is sequentially weakly lower semicontinuous. So, we can find $v\in W^{1,p}(\Omega)$ such that
	\begin{eqnarray}
		& \Psi(v) = \inf\{\Psi(u): u\in W^{1,p}(\Omega)\}, \nonumber \\
		\Rightarrow & \Psi'(v) = 0, \nonumber \\
		\Rightarrow & \langle A_p(v),h\rangle + \langle A_q(v),h\rangle + \int_\Omega\xi(z)|v|^{p-2}vhdz + \int_{\partial\Omega}\beta(z)|v|^{p-2}vhd\sigma = \nonumber \\
		& \int_\Omega k(z,v) hdz\ \mbox{for all}\ h\in W^{1,p}(\Omega). \label{eq13}
	\end{eqnarray}

	In \eqref{eq13} we choose $(\tilde{u}_\eta -v)^+\in W^{1,p}(\Omega)$. Then
	\begin{eqnarray}
		&& \langle A_p(v),(\tilde{u}_\eta-v)^+\rangle + \langle A_q(v),(\tilde{u}_\eta-v)^+\rangle + \int_\Omega\xi(z)|v|^{p-2}v(\tilde{u}_\eta-v)^+dz + \nonumber \\
		&& \int_{\partial\Omega}\beta(z)|v|^{p-2}v(\tilde{u}_\eta-v)^+d\sigma = \int_\Omega\tilde{u}^{-\gamma}_\eta(\tilde{u}_\eta-v)^+dz\ \mbox{(see \eqref{eq12})} \nonumber \\
		& \geq & \int_\Omega\eta(\tilde{u}_\eta-v)^+dz\ \mbox{(see \eqref{eq9} and recall that $0<\eta\leq\eta_0$)} \nonumber \\
		& = & \langle A_p(\tilde{u}_\eta),(\tilde{u}_\eta-v)^+\rangle + \langle A_q(\tilde{u}_\eta),(\tilde{u}_\eta-v)^+\rangle + \int_\Omega\xi(z)\tilde{u}^{p-1}_\eta(\tilde{u}_\eta-v)^+dz + \nonumber \\
		&& \int_{\partial\Omega}\beta(z)\tilde{u}^{p-1}_\eta(\tilde{u}_\eta-v)^+d\sigma\ \mbox{(see Proposition \ref{prop7})}, \nonumber \\
		& \Rightarrow & \tilde{u}_\eta\leq v. \label{eq14}
	\end{eqnarray}

	Then from \eqref{eq12}, \eqref{eq13}, \eqref{eq14} we obtain
	\begin{eqnarray}
		\left\{
			\begin{array}{ll}
				-\Delta_p v(z) -\Delta_q v(z) + \xi(z)v(z)^{p-1} = v(z)^{-\gamma}\ \mbox{for almost all}\ z\in\Omega, \\
				\frac{\partial v}{\partial n_{pq}} + \beta(z)v^{p-1}=0\ \mbox{on}\ \partial\Omega
			\end{array}
		\right\}\label{eq15}\\
		\mbox{(see Papageorgiou \& R\u adulescu \cite{7})}. \nonumber
	\end{eqnarray}

	From \eqref{eq14} we have $v^{-\gamma}\leq\tilde{u}^{-\gamma}_\eta\in L^\infty(\Omega)$ (see \eqref{eq11}). So, from \eqref{eq15} and \cite{8} we have $v\in L^\infty(\Omega)$. Then the nonlinear regularity theory of Lieberman \cite{6} implies that $v\in C_+$.
	Hence
	it follows
	from \eqref{eq14}
	 that
	$$
	v\in D_+.
	$$
	
	Next, we show that this positive solution is unique. To this end, let $\hat{v}\in W^{1,p}(\Omega)$ be another positive solution of \eqref{eq10}. Again we have $\hat{v}\in D_+$. Then
	\begin{eqnarray*}
		&& \langle A_p(v),(\hat{v}-v)^+ \rangle + \langle A_q(v),(\hat{v}-v)^+\rangle + \int_\Omega\xi(z)v^{p-1}(\hat{v}-v)^+dz + \\
		&& \int_{\partial\Omega}\beta(z)v^{p-1}(\hat{v}-v)^+d\sigma \\
		& = & \int_\Omega v^{-\gamma}(\hat{v}-v)^+dz \\
		& \geq & \int_\Omega \hat{v}^{-\gamma}(\hat{v}-v)^+dz \\
		& = & \langle A_p(\hat{v}),(\hat{v}-v)^+\rangle + \langle A_q(\hat{v}),(\hat{v}-v)^+\rangle + \int_\Omega\xi(z)\hat{v}^{p-1}(\hat{v}-v)^+dz + \\
		&& \int_{\partial\Omega}\beta(z)\hat{v}^{p-1}(\hat{v}-v)^+d\sigma \\
		& \Rightarrow & \hat{v}\leq v.
	\end{eqnarray*}

	Interchanging the roles of $v$ and $\hat{v}$ in the above argument, we obtain
	$$
	\begin{array}{ll}
		& v\leq \hat{v},\\
		\Rightarrow & v=\hat{v}.
	\end{array}
	$$

	This proves the uniqueness of the positive solution of the purely singular problem \eqref{eq10}.
\end{proof}

Next, we consider the following nonlinear Robin problem
\begin{equation}\label{eq16}
	\left\{
		\begin{array}{ll}
			-\Delta_p u(z) - \Delta_q u(z) + \xi(z) u(z)^{p-1} = v(z)^{-\gamma}+1\ \mbox{in}\ \Omega, \\
			\frac{\partial u}{\partial n_{pq}} + \beta(z) u^{p-1}=0\ \mbox{on}\ \partial\Omega,\ u>0.
		\end{array}
	\right\}
\end{equation}

\begin{prop}\label{prop9}
	If hypotheses $H(\xi), H(\beta), H_0$ hold, then problem \eqref{eq16} admits a unique solution $\overline{u}\in D_+$ and $v\leq\overline{u}$.
\end{prop}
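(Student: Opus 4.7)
My plan is to mirror the strategy used in Proposition~\ref{prop7}, since the right-hand side of \eqref{eq16} does not depend on $u$ and is in $L^\infty(\Omega)$. First, observe that by Proposition~\ref{prop8}, $v \in D_+$, so $v(\cdot)^{-\gamma} \in L^\infty(\Omega)$, and hence $g := v^{-\gamma} + 1 \in L^\infty(\Omega) \subseteq W^{1,p}(\Omega)^*$. The operator $V$ defined in \eqref{eq6} is continuous, strictly monotone and, by \eqref{eq1}, coercive, hence surjective. Therefore there exists a unique $\overline{u} \in W^{1,p}(\Omega)$ with $V(\overline{u}) = g$, namely the unique weak solution of the Robin problem with source $g$.

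Next, I would show $\overline{u} \in D_+$. Testing $V(\overline{u}) = g$ against $h = -\overline{u}^- \in W^{1,p}(\Omega)$ and using \eqref{eq1} (exactly as in the proof of Proposition~\ref{prop7}) gives $c_2 \|\overline{u}^-\|^p \leq -\int_\Omega g\,\overline{u}^-\,dz \leq 0$, so $\overline{u} \geq 0$, and $\overline{u} \neq 0$ since $g > 0$. The pointwise equation
\[
-\Delta_p \overline{u}(z) - \Delta_q \overline{u}(z) + \xi(z)\overline{u}(z)^{p-1} = v(z)^{-\gamma} + 1 \in L^\infty(\Omega)
\]
together with Proposition~7 of Papageorgiou \& R\u adulescu \cite{8} yields $\overline{u} \in L^\infty(\Omega)$, and then Lieberman's regularity theory \cite{6} gives $\overline{u} \in C_+\setminus\{0\}$. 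The bound
$\Delta_p \overline{u} + \Delta_q \overline{u} \leq \|\xi\|_\infty \overline{u}^{p-1}$
combined with the Pucci--Serrin maximum principle \cite{17} upgrades this to $\overline{u} \in D_+$. Uniqueness is immediate from the strict monotonicity of $V$.

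The heart of the proposition is the comparison $v \leq \overline{u}$; Proposition~\ref{prop4} does not apply directly since $\overline{u}$ does not satisfy a singular equation. Instead, I would argue by direct testing. Using $v$ as a weak solution of \eqref{eq10} and $\overline{u}$ as a weak solution of \eqref{eq16}, I choose the common test function $(v-\overline{u})^+ \in W^{1,p}(\Omega)$ in both weak formulations and subtract, obtaining
\[
\begin{aligned}
&\langle A_p(v)-A_p(\overline{u}),(v-\overline{u})^+\rangle + \langle A_q(v)-A_q(\overline{u}),(v-\overline{u})^+\rangle \\
&\quad + \int_\Omega \xi(z)\bigl(v^{p-1}-\overline{u}^{p-1}\bigr)(v-\overline{u})^+\,dz + \int_{\partial\Omega}\beta(z)\bigl(v^{p-1}-\overline{u}^{p-1}\bigr)(v-\overline{u})^+\,d\sigma \\
&= \int_\Omega \bigl(v^{-\gamma} - (v^{-\gamma}+1)\bigr)(v-\overline{u})^+\,dz = -\int_\Omega (v-\overline{u})^+\,dz.
\end{aligned}
\]
On the set $\{v > \overline{u}\}$ the monotonicity of $A_p,A_q$ and the sign conditions on $\xi,\beta$ force the left-hand side to be $\geq 0$, while the right-hand side is $\leq 0$, with strict inequality unless $(v-\overline{u})^+ \equiv 0$. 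Hence $(v-\overline{u})^+ = 0$ a.e., i.e.\ $v \leq \overline{u}$.

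I do not anticipate a genuine obstacle: existence and uniqueness follow the template already established for problem \eqref{eq5}, and the comparison step is a straightforward monotonicity argument once one notices that the extra ``$+1$'' on the right-hand side of \eqref{eq16} provides the strict ordering between the two source terms. The only point requiring mild care is checking that $v^{-\gamma} + 1 \in L^\infty(\Omega)$ so that the regularity results of \cite{8} and \cite{6} apply verbatim, but this is immediate from $v \in D_+$.
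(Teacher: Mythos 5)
Your proposal is correct and follows essentially the same route as the paper: existence and uniqueness via the surjective, strictly monotone operator $V$ from the proof of Proposition~\ref{prop7}, regularity ($L^\infty$ via \cite{8}, then Lieberman \cite{6} and the maximum principle to get $\overline{u}\in D_+$), and the comparison $v\leq\overline{u}$ by testing both weak formulations with $(v-\overline{u})^+$ and exploiting monotonicity together with $v^{-\gamma}\leq v^{-\gamma}+1$. Your subtraction-based write-up of the comparison step is a slightly more explicit rendering of the same inequality chain the paper uses.
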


\begin{proof}
	We know that $v^{-\gamma}\in L^\infty(\Omega)$ (see \eqref{eq11} and \eqref{eq14}). Then the existence and uniqueness of the solution $\overline{u}\in W^{1,p}(\Omega)\backslash\{0\},\overline{u}\geq0$ of \eqref{eq16} follow from the surjectivity and strict monotonicity of the map $V(\cdot)$ (see the proof of Proposition \ref{prop7}). The nonlinear regularity theory and the nonlinear Hopf's theorem imply that $\overline{u}\in D_+$.

	Moreover, we have
	\begin{eqnarray*}
		&& \langle A_p(\overline{u}),(v-\overline{u})^+ \rangle + \langle A_q(\overline{u}),(v-\overline{u})^+\rangle + \int_\Omega\xi(z)\overline{u}^{p-1}(v-\overline{u})^+dz + \\
		&& \int_{\partial\Omega}\beta(z)\overline{u}^{p-1}(v-\overline{u})^+d\sigma \\
		& = & \int_\Omega [v^{-\gamma}+1](v-\overline{u})^+dz\ \mbox{(see \eqref{eq16})} \\
		& \geq & \int_\Omega v^{-\gamma}(v-\overline{u})^+dz \\
		& = & \langle A_p(v),(v-\overline{u})^+\rangle + \langle A_q(v,(v-\overline{v})^+)\rangle + \int_\Omega\xi(z)v^{p-1}(v-\overline{v})^+dz + \\
		&& \int_{\partial\Omega}\beta(z)v^{p-1}(v-\overline{v})^+d\sigma \\
		& \Rightarrow & v\leq\overline{u}.
	\end{eqnarray*}
The proof is now complete.
\end{proof}

\section{Positive solutions}
In this section we prove the bifurcation-type theorem described in the Introduction.

\begin{prop}\label{prop10}
	If hypotheses $H(\xi), H(\beta), H_0, H(f)$ hold, then $\mathcal{L}\neq\emptyset$ and $S_\lambda\subseteq D_+$.
\end{prop}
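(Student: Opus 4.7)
The plan is to prove the two claims separately, both hinging on the pair $v\in D_+$ from Proposition \ref{prop8} and $\overline u\in D_+$ from Proposition \ref{prop9}: $v$ is the canonical lower solution of \eqref{eqp}, while $\overline u$ becomes an upper solution provided $\lambda$ is small enough that $\lambda f(z,\overline u(z))\leq 1$ a.e. For the existence claim, $H(f)(i)$ together with $\overline u\in L^\infty(\Omega)$ yields the threshold
$$\lambda_0:=\bigl(\|a\|_\infty(1+\|\overline u\|_\infty^{r-1})\bigr)^{-1}>0,$$
so that for every $\lambda\in(0,\lambda_0]$ the right-hand side of \eqref{eq16} dominates $\overline u^{-\gamma}+\lambda f(z,\overline u)$ (since $v\leq\overline u$ forces $\overline u^{-\gamma}\leq v^{-\gamma}$), making $\overline u$ a weak upper solution of $(P_\lambda)$. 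I would then truncate the reaction between $v$ and $\overline u$ by setting
$$\hat\mu_\lambda(z,x)=\begin{cases}v(z)^{-\gamma}+\lambda f(z,v(z))&\text{if }x<v(z),\\ x^{-\gamma}+\lambda f(z,x)&\text{if }v(z)\leq x\leq\overline u(z),\\ \overline u(z)^{-\gamma}+\lambda f(z,\overline u(z))&\text{if }\overline u(z)<x,\end{cases}$$
which is bounded and Carath\'eodory because $v,\overline u\in D_+$ keep $x^{-\gamma}$ under control on the relevant range. The associated $C^1$ functional $\hat\varphi_\lambda(u)=\frac1p\gamma_p(u)+\frac1q\|Du\|_q^q-\int_\Omega\int_0^{u(z)}\hat\mu_\lambda(z,s)\,ds\,dz$ is coercive via \eqref{eq1} and sequentially weakly lower semicontinuous, hence has a global minimizer $u_\lambda$; testing $\hat\varphi_\lambda'(u_\lambda)=0$ against $(v-u_\lambda)^+$ and $(u_\lambda-\overline u)^+$ and comparing with the equations satisfied by $v$ and $\overline u$, the monotonicity of $A_p,A_q$ yields $v\leq u_\lambda\leq\overline u$, so $u_\lambda$ solves \eqref{eqp} and $\lambda\in\mathcal L$.

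For the inclusion $S_\lambda\subseteq D_+$, fix $u\in S_\lambda$. Subtracting the weak formulations of $u$ and $v$ and testing the difference with $h=(v-u)^+$, the strict monotonicity of $A_p,A_q$, the signs of $v^{p-1}-u^{p-1}$ and $v^{-\gamma}-u^{-\gamma}$ on $\{v>u\}$, and the nonnegativity of $\lambda f(z,u)$ on the same set force $v\leq u$ a.e. Hence $u^{-\gamma}\leq v^{-\gamma}\in L^\infty(\Omega)$, and combined with $H(f)(i)$ this places the whole reaction in $L^\infty(\Omega)$; the $L^\infty$-bound of \cite{8} then gives $u\in L^\infty(\Omega)$, Lieberman's regularity \cite{6} yields $u\in C^{1,\alpha}(\overline\Omega)$, and $u\geq v\in D_+$ concludes $u\in D_+$.

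The main difficulty is the bookkeeping of sign for $f(z,u)$ where $u$ is small, which underlies both the subsolution role of $v$ and the comparison $v\leq u$: since $H(f)(iv)$ only controls $f$ from below on $[\vartheta,\infty)$, the natural device is to rewrite $(P_\lambda)$ with $\xi$ replaced by $\xi+\lambda\hat\xi_\rho^\lambda$ and the reaction augmented by $\lambda\hat\xi_\rho^\lambda u^{p-1}$, taking $\rho=\|\overline u\|_\infty$, so that by $H(f)(v)$ the new reaction $f(z,\cdot)+\hat\xi_\rho^\lambda(\cdot)^{p-1}$ is nondecreasing and vanishes at $0$, hence nonnegative on $[0,\rho]$, and the truncation--comparison steps above go through in the shifted formulation.
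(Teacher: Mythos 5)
Your proposal is correct, and its existence half is essentially the paper's own argument: the same truncation of the reaction between $v$ (Proposition \ref{prop8}) and $\overline u$ (Proposition \ref{prop9}), the same coercive, sequentially weakly lower semicontinuous functional and global minimizer, and the same two test functions $(u_\lambda-\overline u)^+$ and $(v-u_\lambda)^+$ giving $u_\lambda\in[v,\overline u]$, hence a solution of \eqref{eqp}; your explicit threshold $\lambda_0=\bigl(\|a\|_\infty(1+\|\overline u\|_\infty^{r-1})\bigr)^{-1}$ is just the quantitative form of the paper's ``choose $\lambda_0>0$ small so that \eqref{17} holds''. Where you genuinely diverge is the inclusion $S_\lambda\subseteq D_+$. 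The paper obtains $u_\lambda\in D_+$ for the \emph{constructed} solution from $u_\lambda\geq v$ plus regularity, and then simply asserts the inclusion for arbitrary elements of $S_\lambda$; the lower bound $v\leq u$ for an arbitrary $u\in S_\lambda$ appears only later (Proposition \ref{prop11}) and is proved there by a truncation--minimization argument combined with the uniqueness of $v$. You instead prove $v\leq u$ immediately and directly: subtract the two weak formulations, test with $(v-u)^+$, and use the monotonicity of $A_p,A_q$, the sign of $\xi,\beta$, the strict monotonicity of $t\mapsto t^{-\gamma}$ and $f\geq 0$. This is more elementary than the paper's Proposition \ref{prop11}, and it also makes the regularity step honest: it yields $u^{-\gamma}\leq v^{-\gamma}\in L^\infty(\Omega)$ \emph{before} invoking the $L^\infty$-estimate of \cite{8} and Lieberman's theory, a point the paper glosses over (for an arbitrary solution the singular term must be controlled before those results apply). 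One remark: your final paragraph's ``shifted formulation'' with $\xi+\lambda\hat\xi_\rho^\lambda$ is unnecessary. Hypothesis $H(f)(iv)$ holds for \emph{every} $\vartheta>0$, so $f(z,x)\geq m_x>0$ for every $x>0$, and together with $f(z,0)=0$ this already gives $f\geq0$ on the whole positive semiaxis --- exactly the sign information your comparison steps (and the paper's) use; no perturbation of $\xi$ is needed.
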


\begin{proof}
	Let $v\in D_+$ be the unique positive solution of the auxiliary problem \eqref{eq10} (see Proposition \ref{prop8}) and $\overline{u}\in D_+$ the unique solution of \eqref{eq16} (see Proposition \ref{prop9}). We know that $v\leq \overline{u}$ (see Proposition \ref{prop9}). Since $\overline{u}\in D_+$, hypothesis $H(f)(i)$ implies that
	$$
	0\leq f(z,\overline{u}(z))\leq c_7\ \mbox{for some}\ c_7>0 \ \mbox{and almost all}\ z\in\Omega.
	$$

	So, we can find $\lambda_0>0$ small such that
	\begin{equation}\label{17}
		0 \leq \lambda f(z,\overline{u}(z)) \leq 1\ \mbox{for almost all } z\in \Omega \ \mbox{and all}\ 0 < \lambda\leq\lambda_0.
	\end{equation}

	We consider the following truncation of the reaction in problem \eqref{eqp}

	\begin{equation}\label{eq18}
	\vartheta_\lambda(z,x) = \left\{
		\begin{array}{ll}
			v(z)^{-\gamma} + \lambda f(z,v(z)) & \mbox{if}\ x < v(z) \\
			x^{-\gamma} + \lambda f(z,x) &\mbox{if}\ v(z) \leq x \leq\overline{u}(z) \\
			\overline{u}(z)^{-\gamma} + \lambda f(z,\overline{u}(z)) & \mbox{if}\ \overline{u}(z) < x.
		\end{array}
	\right.
	\end{equation}

	This is a Carath\'eodory function. We set $\theta_\lambda(z,x) = \int_{0}^x \vartheta_\lambda (z,s) ds$ and consider the functional $\mu_\lambda : W^{1,p}(\Omega) \rightarrow\RR\,\, (\lambda \in (0,\lambda_0])$ defined by
	$$\mu_\lambda(u) = \frac{1}{p} \gamma_p (u) + \frac{1}{q} ||D u||_q^q - \int_{\Omega} \theta_\lambda(z,u)dz\ \mbox{for all }u\in W^{1,p}(\Omega).$$

	Since $0\leq\overline{u}^{-\gamma} \leq v^{-\gamma} \in L^{\infty}(\Omega)$, we see that $\mu_\lambda \in C^1(W^{1,p}(\Omega))$. Also, it is clear from \eqref{eq18} and \eqref{eq1}, that $\mu_\lambda(\cdot)$ is coercive. In addition, it is sequentially weakly lower semicontinuous. So, we can find $u_\lambda \in W^{1,p}(\Omega)$ such that
	\begin{eqnarray}
		&&\mu_\lambda(u_\lambda) = \inf \left\{ \mu_\lambda(u): u\in W^{1,p}(\Omega) \right\}, \nonumber \\
		&&\Rightarrow \mu_\lambda^{'} (u_\lambda) = 0, \nonumber \\
		&\Rightarrow & \langle A_p(u_\lambda),h\rangle + \langle A_q(u_\lambda),h \rangle + \int_{\Omega}\xi(z)| u_\lambda|^{p-2} \nonumber u_\lambda hdz + \int_{\partial\Omega} \beta(z) |u_\lambda|^{p-2} u_\lambda hd\sigma \nonumber \\
		&=& \int_{\Omega} \vartheta_\lambda (z,u_\lambda) hdz\ \mbox{for all } h\in W^{1,p}(\Omega). \label{eq19}
	\end{eqnarray}

	In \eqref{eq19} first we choose $h=(u_\lambda -\overline{u})^+ \in W^{1,p}(\Omega)$. Then
\begin{eqnarray*}
	&&\langle A_p(u_\lambda),(u_\lambda - \overline{u})^+\rangle + \langle A_q(u_\lambda),(u_\lambda-\overline{u})^+\rangle + \int_{\Omega}\xi (z) u_\lambda^{p+} (u_\lambda -\overline{u})^{+} dz + \\
	&& \int_{\partial\Omega} \beta(z) u_\lambda^{p-1} (u_\lambda - \overline{u}) d\sigma \\
	&&=\int_{\Omega} [\overline{u}^{-\gamma} + \lambda f(z,\overline{u})](u_\lambda -\overline{u})^+ dz\ \mbox{(see \eqref{eq18})) } \\
	&&\leq \int_{\Omega} [\overline{u}^{-\gamma} +1](u_\lambda -\overline{u})^+ dz\ \mbox{(see \eqref{17})} \\
	&&\leq \int_{\Omega} [v^{-\gamma} + 1](u_\lambda -\overline{u})^+ dz\ \mbox{(since } v\leq\overline{u}) \\
	&&= \langle A_p(\overline{u}),(u_\lambda -\overline{u})^+ \rangle + \langle A_q(\overline{u}),(u_\lambda-\overline{u})^+ \rangle + \int_{\Omega} \xi (z) \overline{u}^{p-1} (u_\lambda -\overline{u})^+ dz \\
	&&+ \int_{\partial\Omega} \beta(z) \overline{u}^{p-1} (u_\lambda -\overline{u})^+ d\sigma\ \mbox{(see Proposition \ref{prop9}),} \\
	&&\Rightarrow u_\lambda \leq \overline{u}.
\end{eqnarray*}

	Next, in \eqref{eq19} we choose $h=(v-u_\lambda)^+ \in W^{1,p}(\Omega).$ Then
\begin{eqnarray*}
	&&\langle A_p(u_\lambda),(v-u_\lambda)^+\rangle + \langle A_q(u_\lambda),(v-u_\lambda)^+ \rangle + \int_{\Omega} \xi (z) |u_\lambda|^{p-2} u_\lambda (v-u_\lambda)^+ dz + \\
	&& \int_{\partial\Omega} \beta(z) |u_\lambda|^{p-2} u_\lambda(v-u_\lambda)^+ d\sigma \\
	&&=\int_{\Omega} [v^{-\gamma} + \lambda f(z,v) ] (v-u_\lambda)^+ dz \mbox{(see \eqref{eq18})} \\
	&&\geq \int_{\Omega} v^{-\gamma} (v-u_\lambda)^+ dz \mbox{(since } f\geq 0) \\
	&&=\,\,\langle A_p(v),(v-u_\lambda)^+\rangle + \langle A_q(v),(v-u_\lambda)^+\rangle + \int_{\lambda} \xi (z) v^{p-1} (v-u_\lambda)^+ dz \\
	&&+ \int_{\partial\Omega}\beta (z) v^{p-1} (v-u_\lambda)^+ d\sigma\ \mbox{(see Proposition \ref{prop8}),} \\
	&&\Rightarrow v \leq u_\lambda.
\end{eqnarray*}

	So, we have proved that
	\begin{equation}\label{eq20}
		u_\lambda \in [v,\overline{u}].
	\end{equation}

	From \eqref{eq18}, \eqref{eq19}, \eqref{eq20} it follows that
	\begin{equation}\label{eq21}
		\left\{
			\begin{array}{ll}
			-\Delta_p u_\lambda(z) -\Delta_q u_\lambda(z) + \xi(z) u_\lambda (z)^{p-1} = u_\lambda (z)^{-\gamma} + \lambda f (z,u_\lambda(z)) \\ \mbox{for almost all } z\in \Omega, \\
			\frac{\partial u_\lambda}{\partial n_{pq}} + \beta(z) u_\lambda^{p-1} = 0\ \mbox{on}\ \partial\Omega,\ \mbox{(see \cite{7})}.
			\end{array}
		\right\}
	\end{equation}

	From \eqref{eq21} and Proposition \ref{prop7} of Papageorgiou \& R\u adulescu \cite{8}, we have that $u_\lambda\in L^{\infty}(\Omega)$. So, the nonlinear regularity theory of Lieberman \cite{6} implies that $u_\lambda \in D_+$ (see \eqref{eq20}). Therefore we have proved that
	$$
		(0,\lambda_0] \leq {\mathcal L} \neq \emptyset\ \mbox{and}\ S_\lambda \subseteq D_+.
	$$
The proof is now complete.
\end{proof}

	Next, we establish a lower bound for the elements of $S_\lambda$.

\begin{prop}\label{prop11}
If hypotheses $H(\xi),H(\beta),H_0,H(f)$ hold, $\lambda\in {\mathcal L}$ and $u\in S_\lambda$, then $v\leq u$.
\end{prop}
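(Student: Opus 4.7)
The plan is to carry out a direct weak comparison using $h = (v-u)^+ \in W^{1,p}(\Omega)$ as a test function in both equations. Since $v \in D_+$ (Proposition \ref{prop8}) and $u \in S_\lambda \subseteq D_+$ (Proposition \ref{prop10}), both functions are bounded below on $\overline{\Omega}$ by positive constants, so the singular terms $v^{-\gamma}, u^{-\gamma}$ lie in $L^\infty(\Omega)$ and every integral below is well-defined.

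First I would write the weak formulations satisfied by $v$ (as the unique solution of \eqref{eq10}) and by $u \in S_\lambda$ (as a positive solution of \eqref{eqp}), then subtract them after inserting $h = (v-u)^+$. Rearranging yields
$$
\langle A_p(v)-A_p(u),(v-u)^+\rangle + \langle A_q(v)-A_q(u),(v-u)^+\rangle
$$
$$
+ \int_\Omega \xi(z)(v^{p-1}-u^{p-1})(v-u)^+ dz + \int_{\partial\Omega} \beta(z)(v^{p-1}-u^{p-1})(v-u)^+ d\sigma
$$
$$
= \int_\Omega \bigl[v^{-\gamma} - u^{-\gamma} - \lambda f(z,u)\bigr](v-u)^+ dz.
$$
The four terms on the left-hand side are each nonnegative: the $A_p$ and $A_q$ pieces by the strict monotonicity of $A_r$ (Proposition \ref{prop1}), and the $\xi, \beta$ pieces because $\xi, \beta \geq 0$ and $t\mapsto t^{p-1}$ is increasing on $[0,\infty)$. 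Hence the left-hand side is $\geq 0$.

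To force the right-hand side in the opposite direction, I would work on the set $E = \{z\in\Omega : v(z) > u(z)\}$. On $E$, the monotone decreasing nature of $t\mapsto t^{-\gamma}$ gives $v^{-\gamma} - u^{-\gamma} < 0$; moreover, since $u \in D_+$, setting $\vartheta = \min_{\overline{\Omega}} u > 0$ and applying hypothesis $H(f)(iv)$ yields $f(z,u(z)) \geq m_\vartheta > 0$ a.e.\ on $\Omega$, so $-\lambda f(z,u) < 0$. Thus the integrand is strictly negative on $E$, so the right-hand side would be strictly negative if $|E|>0$, contradicting the nonnegativity of the left-hand side. Therefore $|E|=0$, i.e.\ $v \leq u$.

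There is no genuine obstacle here, provided one remembers to invoke $u \in D_+$ so that hypothesis $H(f)(iv)$ can be applied with the positive threshold $\vartheta=\min_{\overline\Omega} u$; the argument is a clean analogue of the comparisons already used in the proofs of Propositions \ref{prop8} and \ref{prop9}, with the extra nonnegative term $-\lambda f(z,u)$ on the right actually only helping the inequality.
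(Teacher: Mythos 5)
Your proof is correct, but it follows a genuinely different route from the paper. The paper does not compare $v$ and $u$ directly: it fixes a small $\eta>0$ with $\tilde{u}_\eta\leq u$ (Proposition \ref{prop7}), builds the truncation \eqref{eq22} of the singular term at the levels $\tilde{u}_\eta$ and $u$, minimizes the associated coercive functional $d(\cdot)$ to obtain a critical point $\hat{v}$, shows $\hat{v}\in[\tilde{u}_\eta,u]$ by testing with $(\hat{v}-u)^+$ and $(\tilde{u}_\eta-\hat{v})^+$, observes that on this order interval the truncated equation coincides with the purely singular problem \eqref{eq10}, and then invokes the \emph{uniqueness} from Proposition \ref{prop8} to conclude $\hat{v}=v\leq u$. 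Your argument short-circuits all of this: since $v\in D_+$ and $u\in S_\lambda\subseteq D_+$ (Proposition \ref{prop10}), both singular terms are in $L^\infty(\Omega)$, so $(v-u)^+$ is an admissible test function in both weak formulations, and the subtraction plus the sign analysis on $E=\{v>u\}$ (where $v^{-\gamma}<u^{-\gamma}$ and $f(z,u)\geq 0$) gives $|E|=0$ directly. This is shorter and more elementary — it avoids the auxiliary minimization entirely and is exactly the comparison mechanism the paper itself uses implicitly in Propositions \ref{prop8} and \ref{prop9} (the steps ``$\Rightarrow\hat{v}\leq v$'', ``$\Rightarrow v\leq\overline{u}$''). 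What the paper's longer route buys is uniformity of method: the truncate-and-minimize scheme is the template reused in Propositions \ref{prop10}, \ref{prop12}, \ref{prop14} and \ref{prop15}, and it leans on the uniqueness statement already proved rather than on a fresh pointwise sign argument. Two small polish points for your write-up: what you need for the gradient terms is not operator monotonicity of $A_r$ tested against $v-u$, but the pointwise monotonicity of $\zeta\mapsto|\zeta|^{r-2}\zeta$ together with $D(v-u)^+=\chi_{\{v>u\}}(Dv-Du)$, i.e.
$$
\langle A_r(v)-A_r(u),(v-u)^+\rangle=\int_{\{v>u\}}\left(|Dv|^{r-2}Dv-|Du|^{r-2}Du,Dv-Du\right)_{\RR^N}dz\geq 0;
$$
and the appeal to $H(f)(iv)$ is not actually needed — $f\geq 0$ suffices, since the strict negativity of the bracket on $E$ already comes from $v^{-\gamma}-u^{-\gamma}<0$.
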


\begin{proof}
	From Proposition \ref{prop10} we know that $u\in D_+$. Then Proposition \ref{prop7} implies that for $\eta >0$ small we have $\tilde{u}_\eta \leq u$. So, we can define the following Carath\'eodory function
	\begin{equation}\label{eq22}
	e(z,x) =
	\left\{
		\begin{array}{ll}
				\tilde{u}_\eta (z)^{-\gamma} & \mbox{if}\ x<\tilde{u}_\eta (z) \\
				x^{-\gamma} & \mbox{if}\ \tilde{u}_\eta(z) \leq x \leq u(z) \\
				u(z)^{-\gamma} & \mbox{if}\ u(z) <x.
		\end{array}
	\right.
	\end{equation}

	We set $E(z,x)= \int_{0}^x e(z,s) ds$ and consider the functional $d:W^{1,p}(\Omega) \rightarrow\RR$ defined by
	$$
		d(u) = \frac{1}{p} \gamma_p(u) + \frac{1}{q} ||D u||_q^q - \int_\Omega E(z,u) dz \mbox{for all } u\in W^{1,p}(\Omega).
	$$

	As before, we have $d\in C^1(W^{1,p}(\Omega))$. Also, $d(\cdot)$ is coercive (see \eqref{eq22}) and weakly lower semicontinuous. Hence we can find $\hat{v} \in W^{1,p}(\Omega) $ such that
	\begin{equation}\label{eq23}
	\begin{array}{ll}
		&\displaystyle d(\hat{u})= \inf \{d(u): u\in W^{1,p}(\Omega)\}, \\
		\Rightarrow &\displaystyle d'(\hat{v}) = 0,\\
		\Rightarrow &\displaystyle \langle A_p(\hat{v}),h\rangle + \langle A_q(\hat{v}),h\rangle + \int_{\Omega} \xi (z) |\hat{v}|^{p-2} \hat{v} hdz+\int_{\partial\Omega} \beta(z) |\hat{v}|^{p-2} \hat{v} hd\sigma = \\
		& \displaystyle \int_{\Omega} e(z,\hat{v}) hdz\ \mbox{for all}\ h\in W_{1,p}(\Omega).\end{array}
	\end{equation}

	In \eqref{eq23} first we choose $h=(\hat{v}-u)^+ \in W^{1,p}(\Omega).$ Exploiting the fact that $u\in S_\lambda$ and recalling that $f\geq 0$, we obtain $\hat{v} \leq u$. Next in \eqref{eq23} we test with $h=(\tilde{u}_\eta -v)^+\in W^{1,p}(\Omega)$. Using \eqref{eq22}, \eqref{eq9} and Proposition \ref{prop7}, we obtain $\tilde{u}_\eta \leq \hat{v}$. Therefore
	\begin{equation}\label{eq24}
		\hat{v} \in [\tilde{u}_\eta,u].
	\end{equation}

	From \eqref{eq22}, \eqref{eq23}, \eqref{eq24} and Proposition \ref{prop8}, we conclude that
	$$
		\begin{array}{ll}
			& \hat{v} = v, \\
			\Rightarrow & v \leq u\ \mbox{for all } u\in S_\lambda.
		\end{array}
	$$
The proof is now complete.
\end{proof}

	Now we can deduce a structural property of $\mathcal{L}$.
\begin{prop}\label{prop12}
If hypotheses $H(\xi),H(\beta),H_0,H(f)$ hold, $\lambda \in \mathcal{L}$, $0<\mu < \lambda$ and $u_\lambda \in S_\lambda \subseteq D_+,$ then $\mu \in \mathcal{L}$ and we can find $u_\mu \in S_\mu \subseteq D_+$ such that $u_\lambda -u_\mu \in\ {\rm int}\,\hat{C}_+$.
\end{prop}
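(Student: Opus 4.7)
The plan is to mimic the scheme used in Proposition \ref{prop10}, but with the upper obstacle $\overline{u}$ replaced by the given $u_\lambda\in S_\lambda$, so as to produce a solution $u_\mu$ squeezed between $v$ and $u_\lambda$. Then I will recover the strict separation $u_\lambda-u_\mu\in\mathrm{int}\,\hat{C}_+$ from Proposition \ref{prop4} with the help of hypotheses $H(f)(iv)$--$(v)$.

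First, for $\mu\in(0,\lambda)$ I truncate the reaction of $(P_\mu)$ in the order interval $[v,u_\lambda]$: set
$$
\hat\vartheta_\mu(z,x)=\left\{\begin{array}{ll} v(z)^{-\gamma}+\mu f(z,v(z)) & \text{if } x<v(z),\\ x^{-\gamma}+\mu f(z,x) & \text{if } v(z)\le x\le u_\lambda(z),\\ u_\lambda(z)^{-\gamma}+\mu f(z,u_\lambda(z)) & \text{if } u_\lambda(z)<x, \end{array}\right.
$$
which is Carathéodory and bounded because $v\in D_+$ gives $v^{-\gamma}\in L^\infty(\Omega)$ and $u_\lambda\in D_+\subseteq L^\infty(\Omega)$. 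The corresponding energy $\hat\mu_\mu(u)=\frac{1}{p}\gamma_p(u)+\frac{1}{q}\|Du\|_q^q-\int_\Omega\hat\theta_\mu(z,u)\,dz$ is $C^1$, coercive (by \eqref{eq1}) and sequentially weakly lower semicontinuous, so admits a global minimizer $u_\mu\in W^{1,p}(\Omega)$.

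Next, I test the Euler equation of $\hat\mu_\mu$ with $(u_\mu-u_\lambda)^+$ and with $(v-u_\mu)^+$. Using the truncation, the facts that $f\ge 0$, $\mu<\lambda$, that $u_\lambda\in S_\lambda$, and that $v$ solves \eqref{eq10} (Proposition \ref{prop8}), the strict monotonicity of the operator $V$ from Proposition \ref{prop7} forces $v\le u_\mu\le u_\lambda$. Consequently $u_\mu$ solves \eqref{eqp} with parameter $\mu$, so $\mu\in\mathcal{L}$ and Lieberman's regularity plus the nonlinear Hopf theorem give $u_\mu\in D_+$, i.e.\ $u_\mu\in S_\mu$.

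The main obstacle is the strict separation $u_\lambda-u_\mu\in\mathrm{int}\,\hat{C}_+$. I would apply Proposition \ref{prop4} to $u_1:=u_\mu$ and $u_2:=u_\lambda$ after adding a common lower-order term to each equation to exploit $H(f)(v)$. Precisely, set $\rho:=\|u_\lambda\|_\infty$ and let $\hat\xi^\lambda_\rho>0$ be as in $H(f)(v)$; define $\hat\xi(z):=\xi(z)+\hat\xi^\lambda_\rho\in L^\infty(\Omega)_+$, and put
$$
h_1(z):=\mu f(z,u_\mu(z))+\hat\xi^\lambda_\rho u_\mu(z)^{p-1},\qquad h_2(z):=\lambda f(z,u_\lambda(z))+\hat\xi^\lambda_\rho u_\lambda(z)^{p-1}.
$$
Then both equations take the form required by Proposition \ref{prop4} with the same $\hat\xi$. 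Since $0\le u_\mu\le u_\lambda\le\rho$ and $x\mapsto f(z,x)+\hat\xi^\lambda_\rho x^{p-1}$ is nondecreasing on $[0,\rho]$, we get
$$
h_2(z)-h_1(z)\ge(\lambda-\mu)f(z,u_\lambda(z))\ \text{for a.a.\ } z\in\Omega.
$$
Finally, because $u_\lambda\in D_+$, setting $\vartheta:=\min_{\overline\Omega}u_\lambda>0$ and invoking $H(f)(iv)$ yields $f(z,u_\lambda(z))\ge m_\vartheta>0$ a.e., whence $h_2-h_1\ge(\lambda-\mu)m_\vartheta=:c_3>0$. Proposition \ref{prop4} then delivers $u_\lambda-u_\mu\in\mathrm{int}\,\hat{C}_+$, completing the proof.
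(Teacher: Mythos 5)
Your proposal is correct and follows essentially the same route as the paper: truncation of the reaction to the interval $[v,u_\lambda]$, minimization of the associated coercive functional, testing with $(u_\mu-u_\lambda)^+$ and $(v-u_\mu)^+$ to get $u_\mu\in[v,u_\lambda]\cap S_\mu\subseteq D_+$, and then Proposition \ref{prop4} applied with $\hat\xi=\xi+\hat\xi^\lambda_\rho$ and exactly your $h_1,h_2$. The only (immaterial) difference is that you bound the gap by $(\lambda-\mu)f(z,u_\lambda(z))\geq(\lambda-\mu)m_\vartheta$ with $\vartheta=\min_{\overline\Omega}u_\lambda$, whereas the paper uses $(\lambda-\mu)f(z,u_\mu(z))$ together with $u_\mu\in D_+$ and $H(f)(iv)$.
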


\begin{proof}
From Proposition \ref{prop11} we know that $v \leq u_\lambda.$ Then we can define the following Carath\'eodory function
	\begin{equation}\label{eq25}
		\hat{k}_\mu(z,x) =
	\left\{
	 	\begin{array}{ll}
			v(z)^{-\gamma} + \mu f(z,v(z)) & \mbox{if}\ x< v(z)\\
			x^{-\gamma} + \mu f (z,x) & \mbox{if}\ v(z) \leq x \leq u_\lambda (z) \\
			u_\lambda(z)^{-\gamma} + \mu f(z,u_\lambda(z)) & \mbox{if}\ u_\lambda(z) < x.
	 	\end{array}
	\right.
	\end{equation}

	We set $\hat{K}_\mu(z,x) = \int_{0}^{x} \hat{k}_\mu (z,s) ds$ and consider the $C^1$-functional $\hat{\Psi}_\mu:W^{1,p}(\Omega) \rightarrow\RR$ defined by
	$$
	\hat{\Psi}_\mu (u) = \frac{1}{p} \gamma_p(u) + \frac{1}{q} || D u ||_q^q - \int_{\Omega} \hat{K}_\mu (z,u) dz\ \mbox{for all}\ u \in W^{1,p}(\Omega).
	$$

	Evidently, $\hat{\Psi}_\mu(\cdot)$ is coercive (see \eqref{eq25}) and sequentially weakly lower semicontinuous. So, we can find $u_\mu \in W^{1,p}(\Omega)$ such that
	\begin{eqnarray}
		&&\hat{\Psi}_\mu(u_\mu) = \inf \left\{ \hat{\Psi}_\mu(u): u\in W^{1,p}(\Omega) \right\}, \nonumber \\
		&\Rightarrow & \hat{\Psi}_\mu' (u_\mu) = 0, \nonumber \\
		&\Rightarrow & \langle A_p(u_\mu),h\rangle + \langle A_q(u_\mu),h\rangle + \int_{\Omega} \xi(z) |u_\mu|^{p-2} u_\mu hdz + \int_{\partial\Omega} \beta (z) |u_\mu|^{p-2} u_\mu hd\sigma \nonumber \\
		&=& \int_{\Omega} \hat{k}_\mu (z,u\mu) hdz\ \mbox{for all } h \in W^{1,p}(\Omega). \label{eq26}
	\end{eqnarray}

	In \eqref{eq26} first we choose $h = (u_\mu - u_\lambda)^+ \in W^{1,p}(\Omega).$ Using \eqref{eq25}, the fact that $\mu < \lambda$ and that $f \geq 0$ and recalling that $u_\lambda \in S_\lambda$, we conclude that $u_\mu \leq u_\lambda$. Next, in \eqref{eq26} we choose $h=(v-u_\mu)^+ \in W^{1,p}(\Omega).$ From \eqref{eq25}, the fact that $f \geq 0$ and Proposition \ref{prop8}, we infer that $v \leq u_\mu$. Therefore we have proved that
	\begin{equation}\label{eq27}
		u_\mu \in [v,u_\lambda].
	\end{equation}

	From \eqref{eq25}, \eqref{eq26}, \eqref{eq27} it follows that
	$$
	u_\mu \in S_\mu \subseteq D_+ \mbox{(see Proposition \ref{prop10}).}
	$$

	Let $\rho= ||u_\lambda||_{\infty} $ and let $\hat{\xi}_{\rho}^\lambda > 0$ be as postulated by hypothesis $H(f)(v)$. We have
	\begin{eqnarray}
		&&-\Delta_p u_\lambda (z) - \Delta_q u_\mu (z) + \left[\xi(z) + \hat\xi_\rho^{\lambda}\right] u_\mu(z)^{p-1} - u_\mu(z)^{-\gamma} \nonumber \\
		&=& \mu f(z,u_\mu(z)) + \hat\xi_\rho^{\lambda} u_\mu(z)^{p-1} \nonumber \\
		&=&\lambda f(z,u_\mu(z)) + \hat\xi_\rho^{\lambda} u_\mu (z)^{p-1} - (\lambda - \mu) f(z,u_\mu (z)) \nonumber \\
		&<& \lambda f (z,u_\mu(z)) + \hat\xi_\rho^{\lambda} u_\lambda (z)^{p-1}\ \mbox{(recall that } \lambda > \mu ) \nonumber \\
		&\leq& \lambda f (z,u_\lambda(z)) + \hat\xi_\rho^{\lambda} u_\lambda(z)^{p-1}\ \mbox{(see \eqref{eq27} and hypothesis } H(f)(v)) \nonumber \\
		&=& -\Delta_p u_\lambda (z) - \Delta_q u_\lambda (z) + \left[ \xi(z) + \hat\xi_\rho^{\lambda} \right] u_\lambda(z)^{p-1} - u_\lambda(z)^{-\lambda}\ \mbox{for almost all}\ z \in \Omega \label{eq28} \\
		&&\mbox{(recall that }u_\lambda \in S_\lambda). \nonumber
	\end{eqnarray}

	We know that
	$$
	0 \leq u_\mu^{-\gamma},\, u_\lambda^{-\gamma} \leq v^{-\gamma} \in L^{\infty}(\Omega).
	$$

	Also, from hypothesis $H(f)(iv)$ and since $u_\mu \in D_+,$ we have
	$$
	0 < c_8 \leq (\lambda - \mu)f(z,u_\mu(z)) \ \mbox{for almost all } z\in \Omega.
	$$

	Invoking Proposition \ref{prop4}, from \eqref{eq28} we conclude that
	$$
	u_\lambda - u_\mu \in \mbox{int}\, \hat{C}_+.
	$$
The proof is now complete.
\end{proof}

\begin{prop}\label{prop13}
If hypotheses $H(\xi),H(\beta),H_0,H(f)$ hold, then $\lambda^* < + \infty$.
\end{prop}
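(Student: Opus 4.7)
The plan is to argue by contradiction: suppose $\mathcal{L}$ is unbounded, and choose $\lambda_n\to+\infty$ with $u_n\in S_{\lambda_n}$. From Proposition~\ref{prop11} we have $u_n\geq v$, hence $u_n(z)\geq c_0:=\min_{\overline\Omega}v>0$ uniformly in $n$. The crux is to exploit the $(p-1)$-superlinearity from $H(f)(ii)$ in a sharp way. Because $f(z,\cdot)$ is Carath\'eodory with $f(z,0)=0$, for every $M>0$ the threshold $x_M$ appearing in ``$f(z,x)\geq Mx^{p-1}$ for $x\geq x_M$'' tends to $0$ as $M\to 0^+$. Fixing $M>0$ once for all, small enough that $x_M\leq c_0$, yields the clean pointwise lower bound
\[
f(z,u_n(z))\geq M\,u_n(z)^{p-1}\qquad\text{a.e. }z\in\Omega,\text{ all }n.
\]

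Next I would test the weak form of $(P_{\lambda_n})$ against the function $\phi_n:=\hat u_1^{p}/u_n^{p-1}$, where $\hat u_1=\hat u_1(p)\in D_+$ is the positive principal eigenfunction normalized by $\|\hat u_1\|_p=1$. Since $u_n\in D_+$ is bounded away from $0$ and $\hat u_1\in C^1(\overline\Omega)$, $\phi_n\in W^{1,p}(\Omega)$. After cancellation, $\int\xi u_n^{p-1}\phi_n=\int\xi\hat u_1^p$ and similarly on the boundary, while the eigenvalue identity for $\hat u_1$ collapses the potential and boundary contributions to $\hat\lambda_1(p)-\|D\hat u_1\|_p^p$. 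The Picone-type inequality for the $p$-Laplacian yields $\int_\Omega|Du_n|^{p-2}(Du_n,D\phi_n)\,dz\leq\|D\hat u_1\|_p^p$. Combining these with the lower bound on $f$ and discarding the nonnegative singular contribution on the right, I arrive at
\[
\hat\lambda_1(p)+R_n\;\geq\;\lambda_n\int_\Omega\frac{f(z,u_n)}{u_n^{p-1}}\,\hat u_1^p\,dz\;\geq\;\lambda_n M,
\]
where $R_n:=\int_\Omega|Du_n|^{q-2}(Du_n,D\phi_n)\,dz$ is the $q$-Laplacian correction.

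It remains to show that $R_n=o(\lambda_n)$. Expanding $D\phi_n=p(\hat u_1^{p-1}/u_n^{p-1})D\hat u_1-(p-1)(\hat u_1^p/u_n^p)Du_n$, the second contribution to $R_n$ is manifestly non-positive, and the first is bounded by $C\,\|Du_n\|_q^{q-1}$, where $C$ depends only on $\|\hat u_1\|_\infty$, $\|D\hat u_1\|_\infty$, $c_0$, and $|\Omega|$. A bound on $\|Du_n\|_q$ follows from testing $(P_{\lambda_n})$ against $u_n$ itself, combined with the growth condition $H(f)(i)$ and Sobolev embedding; because $q<p$, this forces $\|Du_n\|_q^{q-1}/\lambda_n\to 0$, so dividing the displayed inequality by $\lambda_n$ gives $M\leq 0+o(1)$, the desired contradiction. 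The principal obstacle I foresee is the careful control of $R_n$: the $q$-Laplacian cross term has to be absorbed by exploiting $q<p$ together with simultaneous bounds on $\|u_n\|_{W^{1,p}}$ (for which the Ambrosetti--Rabinowitz-type hypothesis $H(f)(iii)$ may be needed), and making this estimate fully rigorous is the delicate part of the argument.
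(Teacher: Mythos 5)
Your strategy (a Picone-type comparison against the principal eigenfunction $\hat{u}_1(p)$, yielding $\lambda_n M\leq \hat\lambda_1(p)+R_n$) is genuinely different from the paper's, but it has a real gap exactly at the point you flag: the control of the $q$-Laplacian term $R_n=\langle A_q(u_n),\hat{u}_1(p)^p/u_n^{p-1}\rangle$. Your plan is to bound $R_n\leq C\|Du_n\|_q^{q-1}$ and then prove $\|Du_n\|_q^{q-1}=o(\lambda_n)$ by testing $(P_{\lambda_n})$ against $u_n$ and invoking $H(f)(i)$ and the Sobolev embedding. This step fails. Testing against $u_n$ gives
\[
\gamma_p(u_n)+\|Du_n\|_q^q=\int_\Omega u_n^{1-\gamma}\,dz+\lambda_n\int_\Omega f(z,u_n)u_n\,dz
\leq c+\lambda_n c\left[1+\|u_n\|_r^r\right],
\]
and since $r>p$, the right-hand side grows in $\|u_n\|$ strictly faster than the left-hand side: the resulting inequality $c_2\|u_n\|^p\leq c+\lambda_n c\left[1+\|u_n\|^r\right]$ imposes no bound whatsoever on $\|u_n\|$ for fixed $\lambda_n$ (this is precisely why superlinear problems need the Cerami-type analysis built on $H(f)(iii)$). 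Nor can you borrow the argument from the Claim in Proposition \ref{prop14}: there $H(f)(iii)$ is applied to a Cerami sequence of a single fixed functional, with bounded energy levels $|\Psi^*_\lambda(u_n)|\leq c_{10}$; here your $u_n$ solve different equations with $\lambda_n\to+\infty$ and come with no energy bound, so nothing excludes $\|Du_n\|_q^{q-1}$ growing faster than $\lambda_n$. A smaller inaccuracy: your justification of $f(z,u_n)\geq M u_n^{p-1}$ (``the threshold $x_M\to 0$ because $f$ is Carath\'eodory with $f(z,0)=0$'') is not what the hypotheses give; the correct source is $H(f)(iv)$ combined with superlinearity, since on $[c_0,R]$ one has $f(z,x)\geq m_{c_0}\geq (m_{c_0}/R^{p-1})x^{p-1}$ while $f(z,x)\geq x^{p-1}$ for $x\geq R$.

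The gap is repairable without any bound on $\|Du_n\|_q$: for $1<q<p$ and positive $u,v$, Young's inequality applied to the expansion of $D\left(v^p/u^{p-1}\right)$ gives the generalized Picone inequality
\[
|Du|^{q-2}\left(Du,D\left(\frac{v^p}{u^{p-1}}\right)\right)_{\RR^N}\leq \frac{p}{q}\left(\frac{v}{u}\right)^{p-q}|Dv|^q ,
\]
so that, using $u_n\geq c_0$, one gets $R_n\leq \frac{p}{q}\left(\|\hat{u}_1(p)\|_\infty/c_0\right)^{p-q}\|D\hat{u}_1(p)\|_q^q=:C$ uniformly in $n$, and your displayed inequality immediately yields the contradiction $\lambda_n M\leq \hat\lambda_1(p)+C$ as $\lambda_n\to+\infty$. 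With this repair your argument is a complete proof and differs substantially from the paper's: the paper uses $H(f)(i)$--$(iv)$ to find $\lambda_0>0$ with $x^{-\gamma}+\lambda_0 f(z,x)\geq x^{p-1}$ for all $x\geq 0$, and then, for $\lambda>\lambda_0$ and $u_\lambda\in S_\lambda$, compares $u_\lambda$ with the constants $m_\lambda+\delta$ (where $m_\lambda=\min_{\overline\Omega}u_\lambda$) via the strong comparison principle of Proposition \ref{prop4}, concluding $u_\lambda-(m_\lambda+\delta)\in{\rm int}\,\hat{C}_+$, which contradicts the definition of $m_\lambda$; no eigenvalue theory or Picone inequality enters there, only the singular term, $H(f)(iv)$--$(v)$ and the comparison principle.
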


\begin{proof}
On account of hypotheses $H(f)(i) \rightarrow (iv)$, we can find $\lambda_0 >0$ big such that
	\begin{equation}\label{eq29}
		x^{-\gamma} + \lambda_0 f(z,x) \geq x^{p-1}\ \mbox{for almost all } z\in \Omega  \ \mbox{and all } x \geq 0.
	\end{equation}

	Let $\lambda > \lambda_0$ and suppose that $\lambda \in \mathcal{L}$. Then we can find $u_\lambda \in S_\lambda \subseteq D_+$ (see Proposition \ref{prop10}). Then $m_\lambda = \min_{\overline{\Omega}} u_\lambda >0$. For $\delta\in (0,1)$ we set $m_\lambda^{\delta} = m_\lambda + \delta$ and for $\rho = ||u_\lambda||_{\infty}$ let $\hat\xi_\rho^{\lambda} > 0$ be as postulated by hypothesis $H(f)(v)$. We have

	\begin{eqnarray}
	&& -\Delta_p m_\lambda^\delta - \Delta_q m_\lambda^{\delta} + [\xi(z) + \hat\xi_\rho](m_\lambda^{\delta})^{p-1} - (m_\lambda^{\delta})^{-\gamma} \nonumber \\
	& =&[\xi(z) + \hat\xi_\rho^{\lambda}]m_{\lambda}^{p-1} - m_\lambda^{-\gamma} + \chi(\delta)\  \mbox{with } \chi(\delta) \rightarrow 0^{+} \mbox{as } \delta \rightarrow 0^+ \nonumber \\
	& <& \xi(z) m_\lambda^{p-1} + (1+ \hat\xi_\rho^{\lambda}) m_\lambda^{p-1} - m_\lambda^{-\gamma} + \chi(\delta) \nonumber\\
	& \leq& \lambda_0 f(z,m_\lambda) + [\xi(z) + \hat\xi_\rho^{\lambda}] m_\lambda^{p-1} + \chi(\delta)\ \mbox{(see \eqref{eq29})} \nonumber \\
	& \leq& \lambda_0 f(z,u_\lambda) + [\xi(z) + \hat\xi_\rho^{\lambda}] u_\lambda^{p-1} + \chi(\delta) \mbox { (see hypothesis } H(f)(v)) \nonumber \\
	& =& \lambda f(z,u_\lambda) + [\xi(z) + \hat\xi_\rho^{\lambda}] u_\lambda^{p-1} -(\lambda -\lambda_0) f(z,u_\lambda) + \chi(\delta) \nonumber\\
	& = &\lambda f(z,u_\lambda) + [\xi(z) + \hat\xi_\rho^{\lambda}] u_\lambda^{p-1} \mbox { for } \delta \in (0,1) \mbox { small } \nonumber \\
	&& \mbox{(recall that}\ u_\lambda \in D_+ \mbox{and see } H(f)(iv)) \nonumber \\
	& =& -\Delta_p u_\lambda - \Delta_q u_\lambda + [\xi(z) + \hat\xi_\rho^{\lambda}] u_\lambda^{p-1} - u_\lambda^{-\gamma}. \label{eq30}
	\end{eqnarray}
	
	Since $(\lambda - \lambda_0)f(z,u_\lambda) - \chi(\delta) \geq c_9 > 0$ for almost all $z \in \Omega$ and for $\delta\in(0,1)$ small (just recall that $u_\lambda \in D_+$ and use hypothesis H(f)(iv), invoking Proposition \ref{prop4}, from \eqref{eq30} we infer that
	$$
	u_\lambda - m_\lambda^{\delta} \in {\rm int}\, \hat{C}_+ \ \mbox{for all }\delta \in (0,1)\ \mbox{small enough}.
	$$

	However,
	this contradicts the definition of $m_\lambda$.
	It follows that $\lambda \notin {\mathcal L}$ and so $\lambda^* \leq \lambda_0 < +\infty$.
\end{proof}

Therefore we have
$$
(0,\lambda^*) \subseteq {\mathcal L} \subseteq (0,\lambda^*].
$$

\begin{prop}\label{prop14}
If hypotheses $H(\xi),H(\beta),H_0,H(f)$ hold and $\lambda \in (0,\lambda^*)$, then problem \eqref{eqp} has at least two positive solutions
$$
u_0,\ \hat{u} \in D_+,\ u_0 \neq \hat{u}.
$$
\end{prop}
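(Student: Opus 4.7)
The plan is to obtain the first solution as a local minimizer of a suitably truncated energy and the second solution via the mountain pass theorem. Since $\lambda<\lambda^*$, pick $\eta\in(\lambda,\lambda^*)\cap\mathcal L$ and fix $u_\eta\in S_\eta\subseteq D_+$. Define the double truncation of the reaction in \eqref{eqp} by
$$
\tilde k_\lambda(z,x)=\begin{cases} v(z)^{-\gamma}+\lambda f(z,v(z)) & \mbox{if } x<v(z),\\
x^{-\gamma}+\lambda f(z,x) & \mbox{if } v(z)\leq x\leq u_\eta(z),\\
u_\eta(z)^{-\gamma}+\lambda f(z,u_\eta(z)) & \mbox{if } x>u_\eta(z),\end{cases}
$$
with primitive $\tilde K_\lambda(z,x)=\int_0^x\tilde k_\lambda(z,s)\,ds$, and let $\tilde\varphi_\lambda\in C^1(W^{1,p}(\Omega),\RR)$ be the associated energy; $C^1$ regularity comes from $v^{-\gamma}\in L^\infty(\Omega)$ (Proposition~\ref{prop8}) and the polynomial bound $H(f)(i)$. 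Also let $\hat\varphi_\lambda$ be built analogously but keeping only the lower cutoff at $v$.

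Coercivity (from the truncation and \eqref{eq1}) together with sequential weak lower semicontinuity provide a global minimizer $u_0$ of $\tilde\varphi_\lambda$. Testing $\tilde\varphi_\lambda'(u_0)=0$ with $(v-u_0)^+$ and $(u_0-u_\eta)^+$, and exploiting $f\geq 0$, $\eta>\lambda$, and the equations satisfied by $v$ and $u_\eta$, places $u_0\in[v,u_\eta]$, whence $u_0\in S_\lambda\subseteq D_+$. Proposition~\ref{prop4} applied to $(v,u_0)$ (the right-hand side gap $\lambda f(z,u_0)\geq c>0$ is ensured by $H(f)(iv)$ and $u_0\in D_+$) yields $u_0-v\in{\rm int}\,\hat C_+$; the $\hat\xi_\rho^\lambda$-shift argument used in the proof of Proposition~\ref{prop12}, applied now to the pair $(u_0,u_\eta)$ at different parameters, yields $u_\eta-u_0\in{\rm int}\,\hat C_+$ (the effective gap being $(\eta-\lambda)f(z,u_\eta)\geq c'>0$). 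Hence $u_0\in{\rm int}_{C^1(\overline\Omega)}[v,u_\eta]$. Since $\tilde\varphi_\lambda$ and $\hat\varphi_\lambda$ coincide on a $C^1$-neighborhood of $u_0$, $u_0$ is a local $C^1(\overline\Omega)$-minimizer of $\hat\varphi_\lambda$, and Proposition~\ref{prop5} promotes it to a local $W^{1,p}(\Omega)$-minimizer.

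Assuming $K_{\hat\varphi_\lambda}$ is finite (otherwise a second solution already exists), a standard deformation around the local minimizer produces $\rho>0$ with $\inf_{\|u-u_0\|=\rho}\hat\varphi_\lambda(u)>\hat\varphi_\lambda(u_0)$. The $(p-1)$-superlinearity $H(f)(ii)$ gives $\hat\varphi_\lambda(t\hat u_1(p))\to-\infty$ as $t\to+\infty$, so there is $e$ with $\|e-u_0\|>\rho$ and $\hat\varphi_\lambda(e)<\hat\varphi_\lambda(u_0)$. The Cerami condition for $\hat\varphi_\lambda$ is verified using $H(f)(iii)$: for a Cerami sequence $\{u_n\}$, the relation $\langle\hat\varphi_\lambda'(u_n),u_n\rangle-p\hat\varphi_\lambda(u_n)\to 0$ together with $H(f)(iii)$ yields a uniform $L^\tau$-bound on $u_n^+$, which, interpolated with $H(f)(i)$ and the Sobolev embedding (precisely here the constraint $\tau>(r-p)\max\{N/p,1\}$ makes the interpolation close), bounds $\|u_n\|$; the $(S)_+$-property of Proposition~\ref{prop1} upgrades weak to strong convergence. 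The mountain pass theorem then produces $\hat u\in K_{\hat\varphi_\lambda}$ with $\hat\varphi_\lambda(\hat u)>\hat\varphi_\lambda(u_0)$, so $\hat u\neq u_0$. Testing $\hat\varphi_\lambda'(\hat u)=0$ with $(v-\hat u)^+$ forces $\hat u\geq v$, hence $\hat u\in S_\lambda$, and nonlinear regularity gives $\hat u\in D_+$.

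The main obstacle is the verification of the Cerami condition in the absence of the Ambrosetti--Rabinowitz condition: the quantitative superlinearity $H(f)(iii)$ together with the sharp exponent restriction on $\tau$ is exactly what makes the interpolation step succeed. Everything else reduces to a direct assembly of Propositions~\ref{prop4}--\ref{prop12}, the local minimizer principle of Proposition~\ref{prop5}, and the standard mountain pass machinery.
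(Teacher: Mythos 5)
Your proposal is correct and follows essentially the same strategy as the paper: a two-sided truncation whose coercive functional yields a global minimizer $u_0$ that is then promoted to a local $C^1(\overline\Omega)$- and hence local $W^{1,p}(\Omega)$-minimizer (Proposition~\ref{prop5}) of the functional truncated only from below, followed by the mountain pass theorem, with the Cerami/C-condition verified exactly as in the paper via $H(f)(iii)$, the $L^\tau$-bound, interpolation, and the $(S)_+$-property. The only deviation is minor: you truncate from below at $v$ and establish $u_0\in{\rm int}_{C^1(\overline\Omega)}[v,u_\eta]$ directly through Proposition~\ref{prop4} (plus the $\hat\xi^\lambda_\rho$-shift for the pair $(u_0,u_\eta)$), whereas the paper truncates at a solution $u_\mu\in S_\mu$ with $\mu<\lambda$ and gets the $C^1$-interior sandwich from two applications of Proposition~\ref{prop12}; both implementations are valid.
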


\begin{proof}
Let $0<\mu<\lambda<\eta <\lambda^*$. According to Proposition \ref{prop12}, we can find $u_\eta \in S_\eta \subseteq D_+$, $u_0\in S_\lambda \subseteq D_+$ and $u_\mu \in S_\mu \subseteq D_+ $ such that

	\begin{equation}\label{eq31}
	\begin{aligned}
	& u_\eta - u_0 \in {\rm int}\, \hat{C}_+\  \mbox{and } u_0 -u_\mu \in {\rm int}\, \hat{C}_+,\\
	& \Rightarrow u_0 \in {\rm int}_{C^1 (\hat{\Omega})} [u_\mu,u_\eta].
	\end{aligned}
	\end{equation}

	We introduce the following Carath\'eodory function
	\begin{equation}\label{eq32}
	\tilde\tau_\lambda (z,x) = \left\{
	 \begin{array}{ll}
				u_\mu(z)^{-\gamma} + \lambda f (z,u_\mu(z)) & \mbox{if } x < u_\mu(z) \\
				x^{-\gamma} + \lambda f (z,x) & \mbox{if } u_\mu(z) \leq x \leq u_\eta(z) \\
				u_\eta(z)^{-\gamma} + \lambda f (z,u_\eta(z)) & \mbox{if } u_\eta(z) <x.
	 \end{array}
	\right.
	\end{equation}

	Set $\tilde{T}_\lambda(z,x) = \int_{0}^{x} \tilde{\tau}_\lambda (z,s)ds$ and consider the $C^1$-functional $\tilde{\Psi}_\lambda: W^{1,p}(\Omega) \rightarrow\RR$ defined by
	$$
	\tilde{\Psi}_\lambda(u) = \frac{1}{p}\gamma_p(u) + \frac{1}{q} ||D u||_q^q - \int_{\lambda}\tilde{T}_\lambda (z,u)dz\ \mbox{for all } u\in W^{1,p}(\Omega).
	$$

	Using \eqref{eq32} and the nonlinear regularity theory, we can easily check that
	\begin{equation}
	K_{\tilde{\Psi}_\lambda} \subseteq [u_\mu,u_\eta] \cap D_+.
	\label{eq33}
	\end{equation}

	Also, consider the Carath\'eodory function
	\begin{equation}
	\tau_\lambda^* (z,x) =  \left\{
	 \begin{array}{ll}
				u_\mu(z)^{-\gamma} + \lambda f (z,u_\mu(z)) & \mbox{if } x \leq u_\mu (z)\\
				x^{-\gamma} + \lambda f(z,x) & \mbox{if } u_\mu (z) < x.
	 \end{array}
	\right.
	\label{eq34}
	\end{equation}

	We set $T_\lambda^* (z,x) = \int_{0}^{x} \tau_\lambda^* (z,s)ds$ and consider the $C^1$-functional $\Psi_\lambda^*: W^{1,p}(\Omega) \rightarrow\RR $ defined by
	$$ \Psi_\lambda^*(u) = \frac{1}{p} \gamma_p(u) + \frac{1}{q}||D u||_q^q - \int_{\Omega} T_\lambda^* (z,u) dz\ \mbox{for all } u\in W^{1,p}(\Omega). $$

	For this functional using \eqref{eq34}, we show that
	\begin{equation}
	K_{\Psi_\lambda^*} \subseteq [u_\mu) \cap D_+.
	\label{eq35}
	\end{equation}

	From \eqref{eq32} and \eqref{eq34} we see that
	\begin{equation}
	\tilde{\Psi}_\lambda \Big|_{[u_\mu,u_\eta]} = \Psi_\lambda^{*} \Big|_{[u_\mu,u_\eta]} \,\,\mbox{and }\,\, \tilde{\Psi}_\lambda^{'}\Big|_{[u_\mu,u_\eta]} = (\Psi_\lambda^*)' \Big|_{[u_\mu,u_\lambda]}.
	\label{eq36}
	\end{equation}

	From \eqref{eq33}, \eqref{eq35}, \eqref{eq36}, it follows that without any loss of generality, we may assume that
	\begin{equation}
	K_{\Psi_\lambda^*} \cap [u_\mu,u_\eta] = \{u_0\}.
	\label{eq37}
	\end{equation}

	Otherwise it is clear from \eqref{eq34} and \eqref{eq35} that we already have a second positive smooth solution for problem \eqref{eqp} and so we are done.

	Note that $\tilde{\Psi_\lambda}(\cdot)$ is coercive (see \eqref{eq32}). Also, it is sequentially weakly lower semicontinuous. So, we can find $\hat{u}_0 \in W^{1,p}(\Omega)$ such that
	\begin{equation}
	\begin{aligned}
	&\tilde{\Psi}_\lambda (\hat{u}_0) = \inf  \left\{ \tilde{\Psi}_\lambda (u): u\in W^{1,p}(\Omega)\right\},\\
	&\Rightarrow \hat{u}_0 \in K_{\tilde{\Psi}_\lambda}, \\
	&\Rightarrow \hat{u}_0 \in K_{\Psi_\lambda^*} \cap [u_\mu,u_\eta]\ \mbox{(see \eqref{eq33},\eqref{eq36}) }, \\
	&\Rightarrow \hat{u}_0 = u_0 \in D_+\ \mbox{(see \eqref{eq37}}), \\
	&\Rightarrow u_0\ \mbox{is a local}\ C^1(\overline{\Omega})\mbox{-minimizer of } \Psi_\lambda^*\ \mbox{(see \eqref{eq31})},\\
	&\Rightarrow u_0\ \mbox{is a local } W^{1,p}(\Omega)\mbox{-minimizer of }\Psi_\lambda^*\ \mbox{(see Proposition \ref{prop5}).}
	\end{aligned}
	\label{eq38}
	\end{equation}

	We assume that $K_{\Psi_\lambda^*}$ is finite. Otherwise on account of \eqref{eq34} and \eqref{eq35} we see that we already have an infinity of positive smooth solutions for problem \eqref{eqp} and so we are done. Then \eqref{eq38} implies that we can find $\rho\in(0,1)$ small. such that

	\begin{equation}
	\begin{aligned}
	&\Psi_\lambda^*(u_0)< \inf  \left\{ \Psi_\lambda^*(u): ||u-u_0|| = \rho \right\} = m_\lambda^* \\
	&\mbox{(see Papageorgiou, R\u adulescu \& Repov\v{s} \cite[Theorem 5.7.6, p. 367]{12}).}
	\end{aligned}
	\label{eq39}
	\end{equation}

	On account of hypothesis $H(f)(ii)$ we have
	\begin{equation}
	\Psi_\lambda^* (t\hat{u}_1 (p)) \rightarrow -\infty\ \mbox{as } t\rightarrow +\infty.
	\label{eq40}
	\end{equation}

	\begin{claim}
		$\Psi_\lambda^*(\cdot)$ satisfies the C - condition.
	\end{claim}
	Let $\{u_n\}_{n \geq 1} \subseteq \mbox{W}^{1,p}(\Omega)$ be a sequence such that
	\begin{equation} |\Psi_\lambda^* (u_n) |\leq c_{10}\ \mbox{for some }  c_{10} > 0   \ \mbox{and all } n \in \NN,
	\label{eq41}
	\end{equation}
	\begin{equation} (1 + ||u_n|| ) (\Psi_\lambda^*)' (u_n) \rightarrow 0\ \mbox{in W }^{1,p}(\Omega)^*.
	\label{eq42}
	\end{equation}

	From \eqref{eq42} we have
	\begin{equation}
	\begin{aligned}
	&| \langle A_p(u_n),h\rangle + \langle A_q(u_n),h\rangle + \int_{\Omega} \xi(z) |u_n|^{p-2}u_n h \, dz + \int_{\partial\Omega} \beta(z) |u_n|^{p-2} u_n hd\sigma\\
	& - \int_{\Omega} \tau_\lambda^*(z, u_n) h \,dz | \leq \frac{\epsilon_n ||h||}{1 + ||u_n||}\ \mbox{for all } h \in W^{1,p},\ \mbox{with } \epsilon_n \rightarrow 0^+.
	\end{aligned}
	\label{eq43}
	\end{equation}

	Choosing  $h= -u_n^{-} \in W^{1,p}(\Omega)$, we obtain
	\begin{eqnarray}
		&&\gamma_p(u_n^{-}) + ||D u_n^{-} ||_q^q \leq c_{11} ||u_n^{-}||\ \mbox{for some } c_{11} > 0  \ \mbox{and all } n \in \NN\ \mbox{(see \eqref{eq34})} \nonumber \\
		&\Rightarrow &\{u_n^{-} \}_{n \geq 1} \subseteq W^{1,p}(\Omega) \  \mbox{is bounded } \mbox{(see \eqref{eq1} and recall that }1<p). \label{eq44}
	\end{eqnarray}
	
	Next in \eqref{eq43} we choose $ h = u_n^+ \in W^{1,p}(\Omega)$. Then
	\begin{equation}
	\begin{aligned}
	&-\gamma_p (u_n^{+}) - || Du_n^+ ||_q^q + \int_{\Omega} \tau_\lambda^* (z,u_n) u_n^+ dz \leq \epsilon_n\ \mbox{for all } n \in \NN,\\
	&\Rightarrow -\gamma_p(u_n^+) - ||Du_n^+||_q^q + \int_{\{ u_n \leq u_{\mu} \}} [u_\mu^{-\gamma} + \lambda f(z,u_\mu)] u_n^{+}dz \\
	&+ \int_{\{ u_\mu < u_n \}} [u_n^{-\gamma}+\lambda f(z,u_n)]u_n^+ dz  \, \leq \, \epsilon_n\ \mbox{for all } n\in \NN\ \mbox{(see \eqref{eq34})}
	\end{aligned}
	\label{eq45}
	\end{equation}

	On the other hand from \eqref{eq41} and \eqref{eq44}, we have
	$$ \gamma_p(u_n^+) + \frac{p}{q} || Du_n^+ ||_q^q - \int_{ \{u_n\leq u_\mu\}} p[u_\mu^{-\gamma} + \lambda f(z,u_p) ] u_n^+ \, dz $$
	\begin{equation*}
	\begin{aligned}
	 &- \int_{\{u_\mu < u_n \}} \bigg[\frac{p}{1-\gamma} (u_n^{1-\gamma} - u_\mu^{1-\gamma}) + p(\lambda F (z,u_n) - \lambda F(z,u_\mu)) \bigg] dz \leq \epsilon_n \\
	&\quad \mbox{for all } n \in \NN\ (\mbox{see}\ \eqref{eq34}).
	\end{aligned}
	\end{equation*}
	\begin{equation}
	\begin{aligned}
	 &\Rightarrow \gamma_p(u_n^+) + \frac{p}{q} ||D u_n^+ ||_p^p - \int_{ \{ u_n \leq u_\mu \}} p [u_\mu^{-\gamma} + \lambda f(z,u_\mu)] u_n^+ dz \\
	&- \int_{ \{ u_p < u_n \}} \bigg[ \frac{p}{1-\gamma} u_n^{1-\gamma} + \lambda p F(z,u_n)] dz \leq c_{12}\
	 \mbox{for some } c_{12} > 0 \ \mbox{and all } n \in \NN.
	\end{aligned}
	\label{eq46}
	\end{equation}

	We add \eqref{eq45} and \eqref{eq46}. Since $p > q$, we obtain
	\begin{eqnarray}
		&&\lambda \int_{ \{ u_\mu < u_n \} } [f(z,u_n)u_n^+ - pF(z,u_n) ] dz \leq \,\, (p-1) \int_{ \{ u_n \leq u_\mu \}} [u_\mu^{-\gamma} + \lambda f(z,u_\mu)] u_n^+ dz \nonumber \\
		&&+\left(\frac{p}{1-\gamma} - 1 \right) \int_{\{ u_\mu < u_n \}} u_n^{1-\gamma}dz \nonumber \\
		&\Rightarrow& \lambda \int_{\Omega} [f(z,u_n^+)u_n^+ - p F (z,u_n^+)] dz \,\, \leq \,\, c_{13} \,\left[|| u_n^+ ||_{1} + 1\right] \label{eq47}\\
		&& \mbox{for some } c_{13}>0, \mbox{all }  n \in \NN. \nonumber
	\end{eqnarray}

	On account of hypotheses $ H(f)(i),(iii)$ we can find $\hat{\beta}_1 \in (0,\hat{\beta}_0) $ and $c_{14} > 0$ such that
	\begin{equation}
	\hat{\beta}_1 x^\tau - c_{14} \leq f(z,x) - p F(z,x)\ \mbox{for almost all } z \in \Omega \ \mbox{and all } x \geq 0.
	\label{eq48}
	\end{equation}

	Using \eqref{eq48} in \eqref{eq47}, we obtain
	$$ ||u_n^+||_\tau^\tau \leq c_{15} \left[ ||u_n^+||_\tau + 1 \right] \mbox{for some } c_{15} > 0 \ \mbox{and all } n\in \NN, $$
	\begin{equation}
	\Rightarrow \{u_n^+\}_{n \geq 1} \leq L^\tau (\Omega)\ \mbox{is bounded}.
	\label{eq49}
	\end{equation}

	First assume  $N\neq p$. From hypothesis $H(f) (iii)$ it is clear that we may assume without any loss of generality that $ \tau < r < p^*. $ Let $t\in(0,1)$ be such that
	$$ \frac{1}{r} = \frac{1-t}{\tau} + \frac{t}{p*}\,. $$

	Then from the interpolation inequality (see Papageorgiou \& Winkert \cite[Proposition 2.3.17, p. 116]{15}), we have
	\begin{eqnarray}
		&&|| u_n^+ ||_r \leq || u_n^+ ||_\tau^{1-t} ||u_n^+||_{p^*}^{t}, \nonumber \\
		& \Rightarrow & ||u_n^+||_r^r \leq c_{16} ||u_n^+||^{tr} \mbox{for some } c_{16} > 0 \ \mbox{and all } n \in \NN\ \mbox{(see \eqref{eq49})}. \label{eq50}
	\end{eqnarray}

	From hypothesis $H(f)(i)$ we have
	\begin{equation}
	f(z,x) x \leq c_{17} [1+ x^r]\ \mbox{for all }z \in \Omega ,\ \mbox{all } x \geq 0 \ \mbox{and some } c_{17} > 0.
	\label{eq51}
	\end{equation}

	From \eqref{eq43} with $h=u_n^+ \in W^{1,p} (\Omega)$, we obtain
\begin{eqnarray}
	&& \gamma_p (u_n^+) + || D u_n^+ ||_q^q - \int_{\Omega} \tau_\lambda^* (z,u_n) u_n^+ dz \leq \epsilon_n\ \mbox{for all 	} n\in \NN,\nonumber\\
	&\Rightarrow& \gamma_p (u_n^+) + || D u_n^+ ||_q^q \leq \int_{\Omega} [(u_n^+)^{1-\gamma} + f(z,u_n^+) u_n^+] dz + c_{18} \nonumber\\
	&&\mbox{for some } c_{18} > 0
	\ \mbox{and all } n \in \NN\ \mbox{(see \eqref{eq34}) } \nonumber\\
	&\leq& c_{19} \left[ 1 + || u_n^+||_r^r \right] \mbox{for some } c_{19} > 0
	\ \mbox{and all } n\in \NN\ \mbox{(see \eqref{eq51})}\nonumber\\
	&\leq& c_{20} [1 + ||u_n^+||^{tr}]\ \mbox{for some } c_{20} > 0
	 \ \mbox{and all } n\in \NN\ \mbox { (see \eqref{eq50})}. \label{eq52}
\end{eqnarray}

	The hypothesis on $\tau$ (see $H(f)(iii)) $ implies that $tr < p.$ So, from \eqref{eq52} we infer that
	$$ \{ u_n^+ \}_{n \geq 1} \subseteq W^{1,p}(\Omega)\ \mbox{is bounded}, $$
	\begin{equation}
	\Rightarrow \{ u_n \}_{n \geq 1} \subseteq W^{1,p}(\Omega)\ \mbox{is bounded (see \eqref{eq44})}.
	\label{eq53}
	\end{equation}

	If $N = p,$ then $p^* = + \infty $ and from the Sobolev embedding theorem, we know that $W^{1,p}(\Omega) \hookrightarrow L^s(\Omega)$ for all $1\leq s < \infty$. Then in order for the previous argument to work, we replace $p^* = + \infty$ by $s > r > \tau$ and let $t\in (0,1)$ as before such that
	$$ \frac{1}{r} = \frac{1-t}{\tau} + \frac{t}{s}, $$
	$$ \Rightarrow tr = \frac{s(r-\tau)}{s-\tau}. $$

	Note that $ \frac{s(r-\tau)}{s-\tau} \rightarrow r -\tau $ as $s \rightarrow + \infty$. But $r-\tau <p$ (see hypothesis H(f)(iii)). We choose $s>r$ big so that $tr<p$. Then again we have \eqref{eq53}.

	Because of \eqref{eq53} and by passing to a subsequence if neccesary, we may assume that
	\begin{equation}
	u_n \xrightarrow{w} u\ \mbox{in } W^{1,p}(\Omega)\ \mbox{and } u_n \rightarrow u\ \mbox{in } L^r (\Omega)\ \mbox{and  }L^p(\partial\Omega).
	\label{eq54}
	\end{equation}

	In \eqref{eq43} we choose $h = u_n - u \in W^{1,p}(\Omega)$, pass to the limit as $n \rightarrow \infty$ and use \eqref{eq54}. Then
	$$ \lim_{n\rightarrow\infty} \left[\langle A_p (u_n),u_n -u\rangle + \langle A_q(u_n),u_n -u\rangle\right] = 0, $$
	\begin{equation*}
	\begin{aligned}
	 &\Rightarrow \limsup_{n\rightarrow\infty} \left[\langle A_p(u_n),u_n -u\rangle + \langle A_q(u),u_n-u\rangle\right] \leq 0 \\
	 &\mbox{(since } A_q(\cdot)\ \mbox{is monotone}) \\
&\Rightarrow\limsup_{n\rightarrow\infty} \langle A_p(u_n),u_n -u\rangle \,\, \leq 0, \\
	 &\Rightarrow u_n \rightarrow u\ \mbox{in } W^{1,p}(\Omega)\ \mbox{(see Proposition \ref{prop1}).}
	\end{aligned}
	\end{equation*}

	Therefore $\Psi_\lambda^*(\cdot)$ satisfies the C-condition. This proves the claim.

	Then \eqref{eq39}, \eqref{eq40} and Claim permit the use of the mountain pass theorem.
	So, we can find $\hat{u}\in W^{1,p}(\Omega)$ such that
	$$ \hat{u} \in K_{\Psi_\lambda^*} \leq [u_\mu) \cap D_+ \mbox{(see \eqref{eq35}) } , m_\lambda^* \leq \Psi_\lambda^* (\hat{u})\ \mbox{(see \eqref{eq39})  }.$$

	Therefore $ \hat{u} \in D_+ $ is a second positive solution of problem \eqref{eqp}  $(\lambda \in (0,\lambda^*))$ distinct from $u_0 \in D_+$.
\end{proof}

Next, we examine what can be said in the critical parameter $\lambda^*$.

\begin{prop}\label{prop15}
If hypotheses $H(\xi),H(\beta),H_0,H(f)$ hold, then $\lambda^* \in {\mathcal L}$.
\end{prop}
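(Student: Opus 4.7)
The plan is to approximate $\lambda^*$ from below and pass to the limit. First, choose a sequence $\lambda_n \uparrow \lambda^*$ with $\lambda_n \in \mathcal{L}$, and for each $n$ select a solution $u_n \in S_{\lambda_n} \subseteq D_+$. By Proposition \ref{prop11} we have $v \leq u_n$ for every $n$, which uniformly controls the singular term: $u_n^{-\gamma} \leq v^{-\gamma} \in L^\infty(\Omega)$.

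The key analytic step, and where I expect the main difficulty to lie, is establishing the a priori bound $\sup_n \|u_n\| < \infty$ in $W^{1,p}(\Omega)$. The absence of the AR-condition forces us to rely on hypothesis $H(f)(iii)$, in the spirit of the Claim inside the proof of Proposition \ref{prop14}. Testing the weak equation satisfied by $u_n$ with $u_n$ itself yields
$$\gamma_p(u_n) + \|Du_n\|_q^q = \int_\Omega u_n^{1-\gamma}\,dz + \lambda_n \int_\Omega f(z,u_n)u_n\,dz.$$
Combined with an a priori upper bound on the energy $\varphi_{\lambda_n}(u_n)$ (available by choosing $u_n$ variationally as in Proposition \ref{prop10}), subtracting an appropriate multiple of the energy identity from the tested equation produces
$$\lambda_n \int_\Omega [f(z,u_n)u_n - p\,F(z,u_n)]\,dz \leq C\bigl(\|u_n\|_1 + 1\bigr).$$
Hypothesis $H(f)(iii)$ then gives a uniform bound on $\|u_n\|_\tau$, and interpolation between $L^\tau$ and $L^{p^*}$ (with the borderline case $N=p$ handled by $L^s$ for $s$ large), together with the Sobolev embedding and the constraint $\tau > (r-p)\max\{N/p,1\}$, allows the superlinear term on the right-hand side to be absorbed by the principal part, yielding the desired $W^{1,p}$ bound.

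Once boundedness is secured, along a subsequence $u_n \xrightarrow{w} u_*$ in $W^{1,p}(\Omega)$, $u_n \to u_*$ in $L^r(\Omega)$ and $L^p(\partial\Omega)$, and pointwise a.e. Testing the equation with $h = u_n - u_*$ and passing to the limit, the monotonicity of $A_q$ discards its contribution, leaving $\limsup_{n\to\infty} \langle A_p(u_n), u_n - u_*\rangle \leq 0$; the $(S)_+$ property from Proposition \ref{prop1} upgrades this to strong convergence $u_n \to u_*$ in $W^{1,p}(\Omega)$.

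Finally, strong convergence combined with the domination $u_n^{-\gamma} \leq v^{-\gamma} \in L^\infty(\Omega)$ (which justifies $u_n^{-\gamma} \to u_*^{-\gamma}$ in $L^1(\Omega)$ via dominated convergence) permits passage to the limit in the weak formulation of $(P_{\lambda_n})$. The limit $u_*$ is then a weak solution of $(P_{\lambda^*})$; since $v \leq u_n$ for all $n$ passes to $v \leq u_*$, in particular $u_* \neq 0$, and the regularity arguments of Proposition \ref{prop10} place $u_* \in D_+$, so $u_* \in S_{\lambda^*}$ and $\lambda^* \in \mathcal{L}$.
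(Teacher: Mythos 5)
Your overall strategy (approximate $\lambda^*$ from below by $\lambda_n\in\mathcal{L}$, prove a uniform $W^{1,p}(\Omega)$ bound via hypothesis $H(f)(iii)$ and interpolation, use the $(S)_+$ property to upgrade to strong convergence, and pass to the limit in the weak formulation) is the same as the paper's, and the second half of your argument — interpolation, $(S)_+$, dominated convergence for the singular term, regularity — is fine. The gap is in the parenthetical remark on which everything rests: that a uniform upper bound on the energies is ``available by choosing $u_n$ variationally as in Proposition \ref{prop10}''. Proposition \ref{prop10} constructs solutions as global minimizers of a truncated functional only for $\lambda\in(0,\lambda_0]$, where $\lambda_0$ is chosen \emph{small} so that $\lambda f(z,\overline{u}(z))\leq 1$ (see \eqref{17}); since $(0,\lambda_0]\subseteq\mathcal{L}$ forces $\lambda^*\geq\lambda_0$, for $\lambda_n$ close to $\lambda^*$ (barring the degenerate case $\lambda^*=\lambda_0$) that construction is simply unavailable, and an arbitrary element of $S_{\lambda_n}$ carries no energy control whatsoever — in superlinear problems solutions of mountain-pass type may have arbitrarily large energy, and nothing excludes that the $u_n$ you picked are of this kind. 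Consequently the key inequality $\lambda_n\int_\Omega[f(z,u_n)u_n-pF(z,u_n)]\,dz\leq C(\|u_n\|_1+1)$ has no justification as written, and without it the whole boundedness step collapses.

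The paper closes exactly this gap by \emph{manufacturing} solutions with non-positive energy instead of using the given $u_n$ directly: since $v\leq u_{n+1}$ (Proposition \ref{prop11}), it globally minimizes the functional $\hat{J}_n$ whose reaction is truncated below at $v$ and above at $u_{n+1}$, the solution at the strictly larger parameter $\lambda_{n+1}$ (this is the device of Proposition \ref{prop12}). Coercivity gives a global minimizer $\hat{u}_n$, the computation \eqref{eq58} — which exploits the fact that $v$ solves the purely singular problem \eqref{eq10} — yields $\hat{J}_n(\hat{u}_n)\leq\hat{J}_n(v)\leq 0$, and since $\hat{u}_n\in[v,u_{n+1}]$, where $\hat{J}_n$ agrees with the functional $j_n$ truncated only from below at $v$, one obtains critical points $\hat{u}_n$ of $j_n$ at parameter $\lambda_n$ satisfying the uniform bound $j_n(\hat{u}_n)\leq 0$ (see \eqref{eq61}, \eqref{eq62}). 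Only with this pair of facts can the argument of the Claim in Proposition \ref{prop14} be run to bound $\{\hat{u}_n\}_{n\geq1}$ in $W^{1,p}(\Omega)$; after that, your limiting argument applies verbatim to $\hat{u}_n$ in place of $u_n$. If you replace your choice of $u_n$ by this construction, your proof becomes correct.
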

\begin{proof}
Let $\{\lambda_n\}_{n \geq 1} \subseteq (0,\lambda^*) $ be such that $\lambda_n < \lambda^*.$ We can find $u_n\in S_{\lambda_n} \subseteq D_+$ for all $n \in \NN$.

	We consider the following Carath\'eodory function
	\begin{equation}
	\mu_n(z,x) = \left\{
	 \begin{array}{ll}
					v(z)^{-\gamma} + \lambda_n f(z,v(z)) & \mbox{if } x \leq v(z) \\
					x^{-\gamma} + \lambda_n f(z,x) &\mbox{if } v(z) < x.
	 \end{array}
	\right.
	\label{eq55}
	\end{equation}

	We set $M_n (z,x) = \int_{0}^x \mu_n (z,x) ds$ and consider the $C^1$-functional $j_n : W^{1,p}(\Omega) \rightarrow\RR$ defined by
	$$ j_n(u) = \frac{1}{p} \gamma_p(u) + \frac{1}{q} || D u ||_q^q  - \int_{\Omega}M_n(z,u) dz\ \mbox{for all } u\in W^{1,p}(\Omega).$$

	Also, we consider the following truncation of $\mu_n(z,\cdot)$
	\begin{equation}
	\hat\mu_n(z,x) = \left\{
	 \begin{array}{lr}
					\mu_n(z,x) & \mbox{if } x \leq u_{n + 1} (z) \\
					\mu_n(z,u_{n+1}(z)) & \mbox{if } u_{n+1} (z) < x
	 \end{array}
	\right.
	\label{eq56}
	\end{equation}
	(recall that $v\leq u_{n+1}$ for all $n\in \NN$, see Proposition \ref{prop11}). This is a Carath\'eodory function. We set $\hat{M}_n(z,x) = \int_{0}^x \hat{\mu}_n(z,s) ds$ and consider the $C^1$-functional $\hat{J}_n : W^{1,p}(\Omega) \rightarrow\RR$ defined by
	$$ \hat{J}_n (u) = \frac{1}{p} \gamma_p (u) + \frac{1}{q} || D u ||_q^q - \int_{\Omega} \hat{M}_n (z,u) dz\ \mbox{for all } u \in W^{1,p}(\Omega). $$

	From \eqref{eq55}, \eqref{eq56} and \eqref{eq1} , it is clear that $\hat{J}_n(\cdot)$ is coercive. Also, it is sequentially weakly lower semicontinuous. So, we can find $\hat{u}_n \in W^{1,p}(\Omega) $ such that
	\begin{equation}
	\hat{J}_n(\hat{u}_n) = \inf\left\{ \hat{J}_n(u): u \in W^{1,p}(\Omega) \right\}.
	\label{eq57}
	\end{equation}

	Then we have
	\begin{eqnarray}
		\hat{J}_n(\hat{u}_n) & \leq & \hat{J}_n(v) \nonumber \\
		& \leq & \frac{1}{p} \gamma_p(v) + \frac{1}{q} ||D v||_q^q - \frac{1}{1-\gamma} \int_{\Omega} v^{1-\gamma} dz \nonumber \\
		& &\mbox{(see \eqref{eq55}, \eqref{eq56} and recall that } f \geq 0 ) \nonumber \\
		& \leq & \langle A_p(v),v\rangle + \langle A_q(v),v\rangle - \int_{\Omega} v^{1-\gamma} dz = 0 \label{eq58} \\
		&& \mbox{(see Proposition \ref{prop8})}. \nonumber
	\end{eqnarray}

	From \eqref{eq57} we have
	\begin{equation}
	\hat{u}_n \in K_{\hat{J}_n} \subseteq [v,u_{n+1}] \cap D_+\ \mbox{for all } n\in \NN\ \mbox{(see \eqref{eq56})}.
	\label{eq59}
	\end{equation}

	Similarly, using \eqref{eq55} we obtain
	\begin{equation}
	K_{j_n} \subseteq [v) \cap D_+.
	\label{eq60}
	\end{equation}
	
	Note that
	$$ J_n |_{[v,u_{n+1}]} = \hat{J}_n |_{[v,u_{n+1}]}\ \mbox{and } J_n'|_{[v,u_{n+1}]} = \hat{J}_n'|_{[v,u_{n+1}]}\ \mbox{(see \eqref{eq55},\ \eqref{eq56}}).$$

	Then from \eqref{eq58}, \eqref{eq59}, \eqref{eq60}, we have
	\begin{equation}
	J_n(\hat{u}_n) \leq 0\ \mbox{for all } n\in \NN \\
	\label{eq61}
	\end{equation}
	\begin{equation}
	\begin{aligned}
	&\langle A_p(\hat{u}_n),h\rangle + \langle A_q(\hat{u}_n),h\rangle + \int_{\Omega} \xi(z)\hat{u}_n^{p-1} h dz + \int_{\partial\Omega} \beta(z)\hat{u}_n^{p-1} hd\sigma = \int_{\Omega} \mu_n(z,\hat{u}_n) h dz \\
	&\quad \mbox{for all } h \in W^{1,p}(\Omega),\ \mbox{all } n\in \NN.
	\end{aligned}
	\label{eq62}
	\end{equation}

	Using \eqref{eq61}, \eqref{eq62} and reasoning as in the Claim in the proof of Proposition \ref{prop14}, we show that
	$$ \{\hat{u}_n\}_{n\geq 1} \subseteq W^{1,p}(\Omega)\ \mbox{is bounded.} $$

	So, we may assume that
	\begin{equation}
	\hat{u}_n \stackrel{w}{\rightarrow} \hat{u}_*\ \mbox{in } W^{1,p}(\Omega)\ \mbox{and } \hat{u}_n \rightarrow \hat{u}_*\ \mbox{in } L^r(\Omega)\ \mbox{and } L^p(\partial\Omega).
	\label{eq63}
	\end{equation}

	In \eqref{eq62} we choose $h=\hat{u}_n - \hat{u}_* \in W^{1,p}(\Omega), $ pass to the limit as $n\rightarrow\infty$ and use \eqref{eq63}. Then as before (see the proof of Proposition \ref{prop14}), we obtain
	\begin{equation}
	\hat{u}_n \rightarrow \hat{u}_*\ \mbox{in } W^{1,p}(\Omega).
	\label{eq64}
	\end{equation}

	In \eqref{eq62} we pass to the limit as $n\rightarrow\infty$ and use \eqref{eq64}. Then
	$$ \langle A_p(\hat{u}_*),h\rangle + \langle A_q(\hat{u}_*),h\rangle + \int_{\Omega} \xi(z) \hat{u}_*^{p-1} hdz + \int_{\partial\Omega}\beta(z) \hat{u}_*^{p-1} hdz $$
	$$ = \int_{\Omega}[\hat{u}_*^{-\gamma} + \lambda^* f(z,\hat{u}_*)] hdz\ \mbox{for all } h \in W^{1,p}(\Omega)\ \mbox{(see \eqref{eq55},\ \eqref{eq60})}, $$
	$$ \Rightarrow \hat{u}_* \in S_{\lambda^*} \subseteq D_+\ \mbox{and so } \lambda^* \in {\mathcal L}. $$
The proof is now complete.
\end{proof}

	From this proposition it follows that
	$${\mathcal L} = (0,\lambda*]. $$

	The next bifurcation-type theorem summarizes our findings and provides a complete description of the dependence of the set of positive solutions of problem \eqref{eqp} on the parameter $\lambda >0$.

	\begin{theorem}\label{th16}
	If hypotheses $H(\xi),H(\beta),H_0,H(f)$ hold, then there exists $\lambda^* >0$ such that
	\begin{itemize}
	\item[(a)] for all $\lambda \in (0,\lambda^*)$ problem \eqref{eqp} has at least two positive solutions
	$$ u_0,\, \hat{u} \in D_+ ,\ u_0 \neq \hat{u};$$
	\item[(b)] for $\lambda = \lambda^*$ problem \eqref{eqp} has at least one positive solution $\hat{u}_*\in D_{+}$;
	\item[(c)] for all $\lambda > \lambda^*$ problem \eqref{eqp} does not have any positive solutions.
	\end{itemize}
	\end{theorem}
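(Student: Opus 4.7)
The plan is to assemble this theorem essentially as a compendium of the previously established propositions, since by this point all the substantive analytic work has been carried out. First I would set $\lambda^* = \sup \mathcal{L}$ exactly as defined earlier in Section 2, and observe that Propositions \ref{prop10} and \ref{prop13} guarantee $0 < \lambda^* < +\infty$, so the critical value is well-defined and finite.

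For part (a), I would simply invoke Proposition \ref{prop14}, which already produces, for each $\lambda \in (0,\lambda^*)$, two distinct positive solutions $u_0, \hat{u} \in D_+$ via the direct method (for $u_0$, obtained as a minimizer sandwiched between sub- and supersolutions $u_\mu$ and $u_\eta$) combined with the mountain pass theorem applied to $\Psi_\lambda^*$ (for $\hat{u}$). Part (b) is exactly the content of Proposition \ref{prop15}, which constructs the solution at the critical parameter as a limit of solutions $\hat{u}_n \in S_{\lambda_n}$ corresponding to an increasing sequence $\lambda_n \uparrow \lambda^*$, and passes to the limit using the $(S)_+$-property of $A_p$ together with the uniform lower bound $v \leq \hat{u}_n$ supplied by Proposition \ref{prop11}.

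Part (c) is a direct consequence of the definition $\lambda^* = \sup \mathcal{L}$: if $\lambda > \lambda^*$ admitted a positive solution, we would have $\lambda \in \mathcal{L}$, contradicting the definition of the supremum. To make this fully rigorous I would first note, by combining Proposition \ref{prop12} (downward closure of $\mathcal{L}$) with Proposition \ref{prop15} (inclusion of the endpoint), that $\mathcal{L} = (0,\lambda^*]$, so that $S_\lambda = \emptyset$ for every $\lambda > \lambda^*$.

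There is no real obstacle in this proof — the heavy lifting (variational minimization, truncation arguments, strong comparison via Proposition \ref{prop4}, verification of the C-condition replacing the AR-condition using hypothesis $H(f)(iii)$, the nonlinear regularity of Lieberman, and the Hopf-type argument for membership in $D_+$) is already distributed across Propositions \ref{prop10}--\ref{prop15}. The only delicate point worth flagging explicitly is that part (b) depends crucially on Proposition \ref{prop11}: without the uniform bound $v \leq \hat{u}_n$, one could not rule out the possibility that $\hat{u}_n \to 0$ and the limit degenerates into the trivial solution, which would not satisfy the singular equation.
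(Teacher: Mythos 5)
Your proposal is correct and matches the paper's treatment exactly: the paper gives no separate proof of Theorem \ref{th16}, presenting it as a summary assembled from Propositions \ref{prop10}--\ref{prop15} (finiteness of $\lambda^*$ from Proposition \ref{prop13}, part (a) from Proposition \ref{prop14}, part (b) from Proposition \ref{prop15}, and part (c) from $\mathcal{L}=(0,\lambda^*]$ via Proposition \ref{prop12} and the definition of $\lambda^*=\sup\mathcal{L}$). Your remark that part (b) hinges on the uniform lower bound $v\leq\hat{u}_n$ from Proposition \ref{prop11} to prevent degeneration to the trivial solution is an accurate reading of the role that bound plays in the paper's limiting argument.
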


\section*{Acknowledgements.} This research was supported by the Slovenian Research Agency grants
P1-0292, J1-8131, J1-7025, N1-0064, and N1-0083.

\end{document}